\newcounter{dummy}
\newtheorem{theorem}[dummy]{Theorem}
\newtheorem*{theorem*}{Theorem}
\newtheorem{conjecture}[dummy]{Conjecture}
\newtheorem{lemma}[dummy]{Lemma}
\newtheorem{cor}[dummy]{Corollary}
\newtheorem*{cor*}{Corollary}
\theoremstyle{remark}
\newtheorem{remark}[dummy]{Remark}
\newcommand{\Q}{\mathbf{q}}
\date{}
\title[The weighted Singer conjecture in dimensions three and four]{The weighted Singer conjecture for Coxeter groups in dimensions three and four}
\author{Wiktor J. Mogilski}
\address{Department of Mathematical Sciences, University of Wisconsin-Milwaukee, Milwaukee, WI 53211, USA}
\email{mogilski@uwm.edu}
\begin{document}

\begin{abstract}
Given a Coxeter system $(W,S)$ there is a contractible simplicial complex $\Sigma$ called the Davis complex on which $W$ acts properly and cocompactly. In \cite{Dymara}, the weighted $L^2$-(co)homology groups of $\Sigma$ were defined, and in \cite{DDJO}, the Singer conjecture for Coxeter groups was appropriately formulated for weighted $L^2$-(co)homology theory. In this article, we prove the weighted version of the Singer conjecture in dimension three under the assumption that the nerve of the Coxeter group is not dual to a hyperbolic simplex, and in dimension four under additional restrictions. We then prove a general version of the conjecture where the nerve of the Coxeter group is assumed to be a flag triangulation of a $3$-manifold.
\end{abstract}
\maketitle

Given a Coxeter system $(W,S)$ with corresponding nerve $L$, Davis constructs a contractible simplicial complex $\Sigma_L$ on which $W$ acts properly and cocompactly (for details see \cite{Davis,Davis1}). Given an $S$-tuple $\Q=(q_s)_{s\in S}$ of positive real numbers satisfying $q_s=q_{s'}$ if $s$ and $s'$ are conjugate in $W$, one defines the weighted $L^2$-(co)homology spaces $L^2_\Q H_k(\Sigma_L)$ (see \cite{DDJO}). They are special in the sense that they admit a notion of dimension: one can attach a nonnegative real number to each of the Hilbert spaces $L^2_\Q H_k(\Sigma_L)$ called the von Neumann dimension. Hence one can define weighted $L^2$-Betti numbers, denoted by $L^2_\Q b_k(\Sigma_L$). More details on this theory can be found in \cite{Davis, DDJO, Dymara}. When $\Q=\mathbf{1}$, the groups $L^2_\mathbf{1} H_k(\Sigma_L)$ are the $L^2$-(co)homology groups of $\Sigma_L$ considered in \cite{DO}. There, the classical Singer conjecture for manifolds is reformulated for Coxeter groups, using the fact that if $L$ is a triangulation of the $(n-1)$-sphere, then $\Sigma_L$ is an $n$-manifold.

\begin{conjecture}[Singer Conjecture]
\label{conj:singer}
Suppose that $L$ is a triangulation of $S^{n-1}$. Then $$L_\mathbf{1}^2H_k(\Sigma_L)=0 \text{ for } k\neq \frac{n}{2}.$$
\end{conjecture}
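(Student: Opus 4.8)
The full conjecture is open; the plan is to establish it, together with its weighted refinement, in low dimensions --- here, dimensions three and four --- by induction on the dimension $n$, with the cases $n\le 2$ (where $\Sigma_L$ is a tree or a contractible surface) as the base. Since $L$ triangulates $S^{n-1}$, the Davis complex $\Sigma_L$ is an $n$-manifold without boundary, and the weighted $L^2$-Poincar\'e duality of \cite{DDJO} furnishes an isomorphism $L^2_\Q H_k(\Sigma_L)\cong L^2_{\Q^{-1}}H^{n-k}(\Sigma_L)$; at $\Q=\mathbf{1}$ this reads $L^2_\mathbf{1}b_k(\Sigma_L)=L^2_\mathbf{1}b_{n-k}(\Sigma_L)$. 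It therefore suffices to prove $L^2_\mathbf{1}H_k(\Sigma_L)=0$ for $k<n/2$: for $n$ odd this forces vanishing in every degree, and for $n$ even it leaves only the middle degree $n/2$, where non-vanishing is expected and $\chi(W)=(-1)^{n/2}L^2_\mathbf{1}b_{n/2}(\Sigma_L)$ serves as a consistency check. Degree $0$ vanishes because $W$ is infinite, so for $n=3$ the entire content is the single statement $L^2_\mathbf{1}H_1(\Sigma_L)=0$, and for $n=4$ it is that statement together with the assertion that degree $2$ is the only surviving term.

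\textbf{Weighted interpolation and decomposition over the nerve.} Rather than argue at $\Q=\mathbf{1}$ directly, the plan is to carry along the full multiparameter $\Q$ and prove the weighted vanishing $L^2_\Q H_k(\Sigma_L)=0$ for $k>n/2$ whenever every $q_s\le 1$ (and, dually, for every $q_s\ge 1$), which specializes to the conjecture at $\Q=\mathbf{1}$. The advantage is a genuine base case: for $\Q$ in the region of convergence of the growth series of $(W,S)$, Dymara's vanishing theorem gives $L^2_\Q H_k(\Sigma_L)=0$ for all $k>0$. One then propagates this vanishing, degree by degree, as $\Q$ is enlarged toward $\mathbf{1}$. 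The engine is the mirror structure on the Davis chamber: deleting a vertex $v$ of $L$ --- equivalently, a generator $s$ --- exhibits $\Sigma_L$ as assembled from $W$-translates of the Davis complexes of the deletion $L\setminus v$ and of $\operatorname{Lk}(v,L)$, and produces Mayer--Vietoris long exact sequences relating $L^2_\Q H_*(\Sigma_L)$ to the groups $L^2_\Q H_*$ of the Davis complexes of the links $\operatorname{Lk}(\sigma,L)$. Since $L$ is a sphere, these links are spheres of strictly smaller dimension and are nerves of Coxeter subsystems; the inductive hypothesis then computes their weighted $L^2$-homology, and feeding this into the exact sequences should close the induction, giving the desired vanishing above the middle dimension and hence the conjecture at $\Q=\mathbf{1}$.

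\textbf{Main obstacle.} The delicate point is the degree-one vanishing $L^2_\Q H_1(\Sigma_L)=0$, equivalently by duality the vanishing in degree $n-1$. This is the first degree in which the link computation, even granting the inductive hypothesis, leaves genuine slack: it reflects a homological finiteness property of $W$ itself rather than of its parabolics, and it is precisely here that the hypotheses of the abstract must enter. When the nerve is dual to a hyperbolic simplex --- in dimension three, $L=\partial\Delta^3$ with $W$ a Lann\'er tetrahedral group --- every deletion $L\setminus v$ is an honest simplex, so the parabolic $W_{S\setminus s}$ is finite and $\Sigma_{L\setminus v}$ collapses onto a single Coxeter cell; the decomposition then transmits no information and the Mayer--Vietoris bookkeeping fails to close, even though $\Sigma_L$ is in fact $\mathbb{H}^3$ and the conclusion holds on purely geometric grounds. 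Excluding this case, and imposing the analogous but somewhat stronger combinatorial restrictions in dimension four, in the spirit of the flag and no-empty-square hypotheses used in \cite{DO}, is exactly what allows the induction to go through --- which is why the theorems of this paper are stated with those caveats.
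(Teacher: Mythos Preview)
The statement is a conjecture, not a theorem; the paper does not prove it, and your proposal rightly opens by noting it is open and then pivots to the low-dimensional weighted cases that the paper actually treats (Theorems \ref{thm:weightedSingerdim3} and \ref{thm:singerdim4onefulllink}). So the relevant comparison is between your sketched approach to those cases and the paper's actual proofs of them.

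Your high-level scaffolding---Poincar\'e duality to reduce to $k>n/2$, working with $\Q\le\mathbf{1}$ rather than $\Q=\mathbf{1}$, and a Mayer--Vietoris decomposition over a vertex $v$ into $\operatorname{St}(v)$, $L\setminus v$, and $\operatorname{Lk}(v)$---matches the paper's. The disk pieces vanish above the middle by Theorem \ref{thm:weightedsingerfor34disks}, exactly as you envisage. But your claim that ``feeding this into the exact sequences should close the induction'' has a genuine gap, and it is \emph{not} located only at the L\'anner groups. In dimension three the link of a vertex is an $S^1$, so its Davis complex is a surface; the Singer conjecture in dimension two places the $L^2$-homology of that surface precisely in degree $1$, which is exactly the degree that feeds into $L^2_\Q H_2(\Sigma_L)$ via the connecting map. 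The inductive hypothesis tells you $L^2_\Q H_1(\Sigma_{\operatorname{Lk}(v)})$ is the only nonzero term---it does not tell you it vanishes---so the Mayer--Vietoris bookkeeping does not close on its own.

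The paper's actual argument supplies exactly the missing piece: it shows the connecting homomorphism $L^2_\Q H_2(\Sigma_L)\to L^2_\Q H_1(W_L\Sigma_M)$ is zero. This is not combinatorial. The paper first invokes Andreev's theorem (Theorem \ref{thm:andreevL}) to reduce to the case $\Sigma_L=\mathbb{H}^3$ (the remaining cases being handled by a Euclidean special subgroup or a K\"unneth splitting), and then runs an analytic argument in $\mathbb{H}^3$: it studies equidistant hypersurfaces $S_t$ from $\partial W_M D$, shows the nearest-point projections between them are $\tanh(t)/\tanh(r)$-quasiconformal (Lemma \ref{lemma:nearestpointquasi}), and uses this to prove that the restriction map on weighted $L^2$ de Rham $\mathcal{H}^1$ is zero (Lemma \ref{lemma:integrationquasi}); by \cite[Lemma 16.2]{DDJO} this dualizes to the vanishing of the connecting map (Lemma \ref{lemma:Ldecomp}). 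Your proposal contains no analogue of this step, and your diagnosis that the L\'anner exclusion is what ``allows the induction to go through'' misidentifies its role: the L\'anner case is excluded because there Andreev's theorem gives no separating full circle to feed into Lemma \ref{lemma:Ldecomp}, not because the Mayer--Vietoris alone would otherwise suffice.
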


Conjecture \ref{conj:singer} is known for elementary reasons for $n\leq 2$ and holds by a result of Lott and L\"{u}ck \cite{LottLuck}, in conjunction with the validity of the Geometrization Conjecture for $3$-manifolds \cite{Perelman}, for $n=3$. It was proved by Davis--Okun \cite{DO} for the case where $W$ is right-angled and $n\leq 4$, and furthermore it was shown that Conjecture \ref{conj:singer} for $n$ odd implies Conjecture \ref{conj:singer} for $n$ even, under the assumption that $W$ is right-angled. It was later proved for the case where $W$ is an even Coxeter group and $n=4$ by Schroeder \cite{Schroeder1}, under the restriction that $L$ is a flag complex. Recently Okun--Schreve \cite[Theorem 4.9]{OS} gave a proof of Conjecture \ref{conj:singer} for the case $n=4$, so now Conjecture \ref{conj:singer} is known in full generality for dimensions $n\leq 4$.

In \cite{DDJO}, the Singer conjecture for Coxeter groups was formulated for weighted $L^2$-(co)homology:

\begin{conjecture}[Weighted Singer Conjecture]
\label{conj:weightedsinger}
Suppose that $L$ is a triangulation of $S^{n-1}$. Then $$L_\Q^2H_k(\Sigma_L)=0 \text{ for } k>\frac{n}{2} \text{ and } \Q\leq \mathbf{1}.$$
\end{conjecture}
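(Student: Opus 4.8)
The plan is to argue by induction on $n$, driven by three tools: weighted $L^2$-Poincar\'e duality for the $n$-manifold $\Sigma_L$, the decomposition of $L^2_\Q H_*(\Sigma_L)$ in terms of the reduced cohomology of $L$ and its full subcomplexes (equivalently, of the links of its simplices) due to Dymara and \cite{DDJO}, and a Mayer--Vietoris comparison of $\Sigma_L$ with the Davis complex on a smaller generating set. I should say at the outset that I do not expect this line of attack to settle the conjecture in every dimension; the honest target is $n\leq 4$, with a genericity hypothesis on $L$ when $n=3$ and further hypotheses when $n=4$, together with the separate case where $L$ triangulates a closed $3$-manifold. The case $n\leq 2$ is known for elementary reasons, so I may assume $n\geq 3$, and also that $W$ is infinite, since otherwise $\Sigma_L$ is compact and contractible and nothing needs to be proved.

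First I would dispose of the top degree. Since $L$ triangulates $S^{n-1}$, the complex $\Sigma_L$ is an $n$-manifold, so weighted Poincar\'e duality gives $L^2_\Q H_k(\Sigma_L)\cong L^2_{\Q^{-1}}H_{n-k}(\Sigma_L)$. Because $\Q\leq\mathbf{1}$ forces $\Q^{-1}\geq\mathbf{1}$, the growth series $W(\Q^{-1})$ diverges, hence $L^2_{\Q^{-1}}H_0(\Sigma_L)=0$ and therefore $L^2_\Q H_n(\Sigma_L)=0$. For $n=3$ this leaves only $k=2$, and for $n=4$ only $k=3$; duality also lets me recast each of these as the vanishing of a degree strictly below the middle dimension at the parameter $\Q^{-1}\geq\mathbf{1}$, which is the form in which the inductive hypothesis is used.

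For the inductive step, fix a vertex $s$ of $L$ and let $L'$ be the full subcomplex on $S\setminus\{s\}$. Then $\Sigma_L$ is assembled from $W$-translates of $\Sigma_{L'}$ glued along $W$-translates of the wall $\Sigma_{\mathrm{Lk}(s,L)}$, yielding a long exact sequence that links the weighted $L^2$-(co)homologies of $\Sigma_L$, $\Sigma_{L'}$, and $\Sigma_{\mathrm{Lk}(s,L)}$; since $\mathrm{Lk}(s,L)$ triangulates $S^{n-2}$ and the weights restrict to $\leq\mathbf{1}$, the dimension-$(n-1)$ instance of the conjecture --- elementary for $n=3$, and the earlier three-dimensional result for $n=4$ --- controls the wall. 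Combining this with the \cite{DDJO} decomposition, one finds that the only part of $L^2_\Q H_k(\Sigma_L)$ in the critical degree that is not already killed is a single residual term attached to $L$ itself, and the heart of the argument is to show that this term vanishes --- equivalently, that the relevant connecting homomorphism is injective in the critical degree, so that no nonzero weighted harmonic cocycle is invisible to the walls. This is where the hypotheses enter: for $n=3$ the residual term survives precisely when $(W,S)$ is a cocompact reflection group on a hyperbolic simplex, so that $\Sigma_L=\mathbb{H}^3$ and the decomposition degenerates, which forces that case to be excluded; for $n=4$ one must in addition know that the three-dimensional links occurring are among the types already handled, which is the source of the additional restrictions. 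I expect this injectivity statement to be the main obstacle, because at a general parameter $\Q$ there is no ambient geometric space playing the role of the universal cover, and the concentration theorems of \cite{DDJO} --- which put $L^2_\Q H_*(\Sigma_L)$ in degree $0$ when $\Q$ is small and in degree $n$ when $\Q$ is large --- do not apply in the intermediate range $\rho<\Q\leq\mathbf{1}$, where $\rho$ is the radius of convergence of $W(\Q)$; one is therefore left with a purely homological problem in the exact sequence that must be resolved by a careful analysis of the residual term using the combinatorics of $L$ and the behaviour of the weighted $L^2$-Betti numbers as functions of $\Q$.

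Finally, for the case where $L$ is a flag triangulation of a closed $3$-manifold $M$, the complex $\Sigma_L$ is a $4$-dimensional complex that is not a manifold, so Poincar\'e duality is unavailable; instead I would use that flagness makes every link $\mathrm{Lk}(s,L)$ a flag triangulation of $S^2$, so the three-dimensional result applies to all walls. Feeding this into the \cite{DDJO} decomposition reduces the vanishing of $L^2_\Q H_k(\Sigma_L)$ for $k>2$ to identifying the contribution of $L$ itself, which I would compute from $H^*(L)\cong H^*(M)$ together with the assumption $\Q\leq\mathbf{1}$.
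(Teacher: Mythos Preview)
The overall architecture you describe---Poincar\'e duality for the top degree, a Mayer--Vietoris/removal-of-a-vertex sequence, and induction on the links---matches the paper's strategy. But the proposal has a genuine gap at exactly the point you flag as ``the main obstacle,'' and the diagnosis you give there is mistaken in a way that would steer you away from the actual argument.

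For $n=3$, after Mayer--Vietoris along a separating full circle $M\subset L$, the remaining issue is the connecting map $L^2_\Q H_2(\Sigma_L)\to L^2_\Q H_1(W_L\Sigma_M)$. You assert that this residual term ``survives precisely when $(W,S)$ is a cocompact reflection group on a hyperbolic simplex,'' and then say that since at a general $\Q$ ``there is no ambient geometric space,'' one must resolve it by ``a careful analysis \dots\ using the combinatorics of $L$ and the behaviour of the weighted $L^2$-Betti numbers as functions of $\Q$.'' Both claims are off. First, for a generic $L$ with $\Sigma_L=\mathbb{H}^3$ the group $L^2_\Q H_1(\Sigma_M)$ is typically nonzero; what must be shown is that the \emph{map} vanishes, not the target. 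Second, the paper's resolution is precisely geometric: one uses a weighted de~Rham model on $\mathbb{H}^3$ with weight $f(p)=q_w$ for $p\in wD$, takes equidistant surfaces $S_t$ to the boundary of $W_M D$, shows the nearest-point projection $S_t\to S_r$ is $\tanh(t)/\tanh(r)$-quasiconformal, and then integrates the resulting norm inequality over $t\in[r,\infty)$ to force any closed $L^2_f$ $1$-form to restrict trivially to $S_r$. This is the content of Lemmas~\ref{lemma:nearestpointquasi}--\ref{lemma:integrationquasi} and is the heart of the $n=3$ proof. Your proposal contains no analogue of this step, and the combinatorial/Betti-number strategy you gesture at does not supply one. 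The exclusion of the hyperbolic-simplex (L\'anner) case is not because the connecting-map argument fails there, but because $L=\partial\Delta^3$ admits no separating full circle on which to run Mayer--Vietoris in the first place; Andreev's theorem is then used to enumerate the remaining non-hyperbolic cases, each of which is handled by K\"unneth or by the vanishing of $L^2_\Q H_1(\Sigma_M)$ when $M$ is Euclidean.

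For the flag $3$-manifold nerve, your plan to read off the contribution of $L$ from $H^*(M)$ via the \cite{DDJO} decomposition is also too vague: $\Sigma_L$ is only a $4$-pseudomanifold, and the paper instead passes to the Coxeter-cell structure and uses the ruin exact sequences $(\Omega(U,T),\partial)$ of \cite{DDJO}, together with Schroeder's pseudomanifold argument for the top degree, to strip off vertices one at a time. The link input is your Theorem~\ref{thm:weightedSingerdim3}, but the global assembly is via ruins, not a single residual-term computation.
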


By weighted Poincar\'{e} duality, this is equivalent to the conjecture that if $\Q\geq \mathbf{1}$ and $k<\frac{n}{2}$, then $L_\Q^2H_k(\Sigma_L)$ vanishes. Conjecture \ref{conj:weightedsinger} is known for elementary reasons for $n\leq 2$, and in \cite{DDJO}, it was proved for the case where $W$ is right-angled and $n\leq 4$, and furthermore it was shown that Conjecture \ref{conj:weightedsinger} for $n$ odd implies Conjecture \ref{conj:weightedsinger} for $n$ even (also under the assumption that $W$ is right-angled). In this article we prove Conjecture \ref{conj:weightedsinger} for $n=3,4$, under some additional restrictions on the nerve $L$.

\begin{theorem}
\label{thm:weightedSingerdim3}
Suppose that the nerve $L$ of a Coxeter group is a triangulation of $S^2$ not dual to a hyperbolic $3$-simplex. Then $$L_\Q^2H_k(\Sigma_L)=0 \text{ for } k>1 \text{ and } \Q\leq \mathbf{1}.$$
\end{theorem}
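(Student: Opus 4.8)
Since $L$ triangulates $S^2$ the Davis complex $\Sigma_L$ is a contractible $3$-manifold, so the only degrees at issue are $k=2$ and $k=3$. The degree-three statement is immediate: weighted Poincar\'e duality gives $L_\Q^2H_3(\Sigma_L)\cong L_{\Q^{-1}}^2H_0(\Sigma_L)$, and since $\Q\le\mathbf 1$ forces $\Q^{-1}\ge\mathbf 1$, the growth series $\sum_{w\in W}\Q^{-w}$ has every term at least $1$ and diverges because $W$ is infinite, so $L_{\Q^{-1}}^2b_0(\Sigma_L)=0$. Everything therefore reduces to showing $L_\Q^2H_2(\Sigma_L)=0$ for $\Q\le\mathbf 1$ (equivalently, by weighted Poincar\'e duality, $L_\Q^2H_1(\Sigma_L)=0$ for $\Q\ge\mathbf 1$). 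Unlike the classical case, Lott--L\"uck and Geometrization are unavailable here, so the argument must be combinatorial.

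The plan is a strong induction on the number $|S|$ of vertices of $L$, run simultaneously for nerves triangulating $S^2$ and nerves triangulating the $2$-disk; for disk nerves one asks only for vanishing of $L_\Q^2H_k$ in degrees $k\ge 2$ with $\Q\le\mathbf 1$, which (as an Euler-characteristic count shows) is compatible with $L_\Q^2H_1$ being nonzero. The base case is $|S|=4$: then $L=\partial\Delta^3$, and by the classification of Coxeter groups whose nerve is the boundary of a simplex, $W$ is finite, irreducible affine of rank $4$, or a Lann\'er group. If $W$ is finite, $\Sigma_L$ is a compact contractible $3$-manifold and has no reduced weighted $L^2$-homology in positive degrees; if $W$ is affine, $\Sigma_L\cong\mathbb E^3$ and the vanishing of $L_\Q^2H_*(\Sigma_L)$ is the computation of \cite{DDJO} for affine Coxeter groups; if $W$ is a Lann\'er group, $L$ is dual to a compact hyperbolic $3$-simplex and $\Sigma_L\cong\mathbb H^3$ --- this is precisely the case excluded in the hypothesis, and it must be excluded because the weighted $L^2$-cohomology of a hyperbolic simplex reflection group in the range $\Q\le\mathbf 1$ is not accessible by these methods. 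The genuinely small disk nerves have compact Davis complexes and are handled directly.

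For the inductive step, take $|S|\ge 5$. If some $3$-cycle of $L$ is not a face of $L$, it separates $S^2$ and decomposes $L$ along it; otherwise every vertex link of $L$ is an induced cycle, and by Euler's formula $L$ has a vertex $s$ of valence $d\in\{3,4,5\}$ whose star can be split off. In either case one obtains an amalgamated-product decomposition $W\cong W_1 *_{W_0} W_2$ over a parabolic subgroup $W_0$ that is infinite --- it is $W$ of the separating $3$-cycle in the first case and $W_{\operatorname{Lk}(s)}$ in the second --- so that, writing $\Sigma_i=\Sigma_{W_i}$, the space $\Sigma_0$ is a noncompact contractible surface while $\Sigma_1,\Sigma_2$ are Davis complexes of nerves triangulating the $2$-disk with strictly fewer vertices. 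Accordingly $\Sigma_L$ is a tree of Davis complexes with vertex spaces copies of $\Sigma_1,\Sigma_2$ and edge spaces copies of $\Sigma_0$, which gives a Mayer--Vietoris long exact sequence
$$\cdots\to\mathcal I\!\left(L_\Q^2H_k(\Sigma_0)\right)\to\mathcal I\!\left(L_\Q^2H_k(\Sigma_1)\right)\oplus\mathcal I\!\left(L_\Q^2H_k(\Sigma_2)\right)\to L_\Q^2H_k(\Sigma_L)\to\mathcal I\!\left(L_\Q^2H_{k-1}(\Sigma_0)\right)\to\cdots,$$
where $\mathcal I(-)$ denotes the module induced from the relevant parabolic up to the Hecke--von Neumann algebra of $W$. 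Taking $k=2$ with $\Q\le\mathbf 1$: the terms $L_\Q^2H_2(\Sigma_1)$ and $L_\Q^2H_2(\Sigma_2)$ vanish by the inductive hypothesis, and $L_\Q^2H_2(\Sigma_0)$ vanishes by the (elementary) case $n=2$ of Conjecture \ref{conj:weightedsinger} for the surface $\Sigma_0$, so the sequence identifies $L_\Q^2H_2(\Sigma_L)$ with the kernel of $\mathcal I\!\left(L_\Q^2H_1(\Sigma_0)\right)\to\mathcal I\!\left(L_\Q^2H_1(\Sigma_1)\right)\oplus\mathcal I\!\left(L_\Q^2H_1(\Sigma_2)\right)$.

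The main obstacle is to show that this last map is injective. The difficulty is that the middle-degree group $L_\Q^2H_1(\Sigma_0)$ of the surface $\Sigma_0$ does not vanish and so cannot be eliminated by a vanishing theorem; instead one must use that $\Sigma_0$ sits inside $\Sigma_1$ and $\Sigma_2$ as a totally geodesic (convex) subcomplex --- the Davis complex of a parabolic subgroup --- together with the product- or cone-like structure of the pieces over $\Sigma_0$, to extract the required injectivity after inducing up to $W$. A second, technical, point is the exactness of the displayed sequence \emph{as a sequence of Hilbert modules over the Hecke--von Neumann algebra}, i.e.\ that the relevant maps have closed image; this is the familiar subtlety of Mayer--Vietoris arguments in the weighted $L^2$-setting, dealt with by lemmas in the spirit of \cite{DDJO}. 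Once these are in place, a check that the induction closes --- accounting for the small disk and circle pieces and confirming that the hyperbolic-simplex nerve is the unique exception --- completes the proof.
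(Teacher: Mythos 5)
Your reduction to degree $2$ and the Mayer--Vietoris skeleton (split $L$ along an empty triangle or, in the flag case, a vertex link; kill the $H_2$ of the two disk pieces; worry about the $H_1$ of the separating circle group) is the same skeleton the paper uses. But the step you label ``the main obstacle'' --- injectivity of $\mathcal I\bigl(L_\Q^2H_1(\Sigma_0)\bigr)\to\mathcal I\bigl(L_\Q^2H_1(\Sigma_1)\bigr)\oplus\mathcal I\bigl(L_\Q^2H_1(\Sigma_2)\bigr)$, equivalently vanishing of the connecting map --- is not a technicality to be ``extracted'' from convexity of $\Sigma_0$; it is the entire new content of the theorem, and your proposal gives no mechanism for it (indeed, given the vanishing of the other terms, that injectivity is essentially equivalent to the statement being proved, so appealing to it is close to circular). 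The paper's mechanism is specific and geometric: first Andreev's theorem (Theorem \ref{thm:andreevL}) is used to put a hyperbolic structure on $\Sigma_L$, i.e.\ $\Sigma_L=\mathbb H^3$, except in the degenerate cases (a Euclidean special subgroup, $W=W_T\times D_\infty$, or $L=\partial\Delta^3$), which are disposed of separately via \cite[Corollary 14.5]{DDJO} and the weighted K\"unneth formula. Then, instead of proving injectivity directly, the connecting homomorphism is dualized by \cite[Lemma 16.2]{DDJO} to the inclusion-induced map $L_{\Q^{-1}}^2H_1(\Sigma_M)\to L_{\Q^{-1}}^2H_1(\Sigma_L)$ with $\Q^{-1}\geq\mathbf 1$, which is shown to be zero analytically: one realizes $\Sigma_M$ (up to proper homotopy) as an equidistant surface $S_r$ of the convex set $W_MD\subset\mathbb H^3$, proves that nearest-point projection $S_t\to S_r$ is $\tanh(t)/\tanh(r)$-quasiconformal (Lemma \ref{lemma:nearestpointquasi}), and uses the quasiconformal comparison of weighted norms on $1$-forms (Lemma \ref{lemma:middlequasi}) plus Fubini to show that any closed $1$-form on $\mathbb H^3$ whose restriction to $S_r$ were nonzero in reduced cohomology would have $\int_r^\infty\frac{\tanh(r)}{\tanh(t)}\,dt=\infty$ as a lower bound for its weighted norm (Lemma \ref{lemma:integrationquasi}). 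None of this is present, even in outline, in your proposal; moreover your uniform injectivity claim would also have to cover the cases where the separating circle group is Euclidean or where $W$ splits as a product, which require genuinely different arguments.

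Two further points. The vanishing for the disk pieces is not proved here by your proposed simultaneous induction: the paper quotes it as Theorem \ref{thm:weightedsingerfor34disks} from \cite{Mogilski}, where it is established by different methods (the fattened Davis complex); running it inside your induction would face the same unproved middle-degree obstacle at every stage. Also, in your base case $|S|=4$, the nerve $\partial\Delta^3$ cannot come from a finite $W$ (its nerve would be the full simplex), and the affine case is the irreducible rank-$4$ affine group; these are small slips, but they signal that the case analysis at the Andreev boundary (which the paper carries out explicitly as Cases I--III) needs care rather than a parenthetical ``check that the induction closes.''
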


Note that, in conjunction with weighted Poincar\'{e} duality and \cite[Theorem 7.1]{Dymara}, Theorem \ref{thm:weightedSingerdim3} explicitly describes the behavior of the $L^2_\Q$-(co)homology groups for all $\Q$: they are always concentrated in a single dimension. Let $\mathcal{R}$ denote the region of convergence of the growth series of the corresponding Coxeter group.

\begin{cor}
Suppose that the nerve $L$ of a Coxeter group is a triangulation of $S^2$ not dual to a hyperbolic $3$-simplex.
\begin{itemize}
  \item If $\Q\in\bar{\mathcal{R}}$, then $L_\Q^2H_\ast(\Sigma_L)$ is concentrated in dimension $0$.
  \item If $\Q\notin\mathcal{R}$ and $\Q\leq\mathbf{1}$, then $L_\Q^2H_\ast(\Sigma_L)$ is concentrated in dimension $1$.
  \item If $\Q\notin\mathcal{R}^{-1}$ and $\Q\geq\mathbf{1}$, then $L_\Q^2H_\ast(\Sigma_L)$ is concentrated in dimension $2$.
  \item If $\Q\in\bar{\mathcal{R}}^{-1}$, then $L_\Q^2H_\ast(\Sigma_L)$ is concentrated in dimension $3$.
\end{itemize}
\end{cor}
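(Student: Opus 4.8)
The plan is to obtain all four statements by combining Theorem~\ref{thm:weightedSingerdim3} with weighted Poincar\'e duality, \cite[Theorem~7.1]{Dymara}, and the standard computation of weighted $L^2$-(co)homology in degree $0$. Throughout, write $W(\Q)=\sum_{w\in W}q_w$ for the growth series, and recall that since $L$ is a triangulation of $S^2$ the Davis complex $\Sigma_L$ is a $3$-manifold, so weighted Poincar\'e duality supplies isomorphisms $L_\Q^2H_k(\Sigma_L)\cong L_{\Q^{-1}}^2H_{3-k}(\Sigma_L)$ in every degree $k$. I will also use the degree-$0$ fact (from \cite{DDJO,Dymara}) that $L_\Q^2H_0(\Sigma_L)$ can be nonzero only when $W(\Q)$ converges, so that $L_\Q^2H_0(\Sigma_L)=0$ whenever $\Q\notin\mathcal{R}$; dually, $L_\Q^2H_3(\Sigma_L)=0$ whenever $\Q\notin\mathcal{R}^{-1}$.

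First I would dispatch the extreme cases. If $\Q\in\bar{\mathcal{R}}$, then \cite[Theorem~7.1]{Dymara} gives $L_\Q^2H_k(\Sigma_L)=0$ for all $k>0$, so $L_\Q^2H_\ast(\Sigma_L)$ is concentrated in dimension $0$; applying the duality isomorphism to $\Q^{-1}\in\bar{\mathcal{R}}$ then yields the $\bar{\mathcal{R}}^{-1}$ case, i.e. concentration in dimension $3$. Next, for $\Q\leq\mathbf{1}$ with $\Q\notin\mathcal{R}$, Theorem~\ref{thm:weightedSingerdim3} gives $L_\Q^2H_k(\Sigma_L)=0$ for $k>1$, while the degree-$0$ fact gives $L_\Q^2H_0(\Sigma_L)=0$; hence $L_\Q^2H_\ast(\Sigma_L)$ is concentrated in dimension $1$. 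Finally, if $\Q\geq\mathbf{1}$ and $\Q\notin\mathcal{R}^{-1}$, then $\Q^{-1}\leq\mathbf{1}$ and $\Q^{-1}\notin\mathcal{R}$, so by the case just proved $L_{\Q^{-1}}^2H_\ast(\Sigma_L)$ is concentrated in dimension $1$; feeding this through $L_\Q^2H_k(\Sigma_L)\cong L_{\Q^{-1}}^2H_{3-k}(\Sigma_L)$ forces $L_\Q^2H_\ast(\Sigma_L)$ to be concentrated in dimension $2$.

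No serious obstacle arises: once Theorem~\ref{thm:weightedSingerdim3} is in hand, the corollary is pure bookkeeping with Poincar\'e duality. The one point deserving care is the degree-$0$ input and its interaction with the precise description of $\mathcal{R}$ and its closure --- specifically, checking that the stated hypothesis ``$\Q\notin\mathcal{R}$'' (rather than ``$\Q\notin\bar{\mathcal{R}}$'') is exactly what is needed to force $L_\Q^2H_0(\Sigma_L)=0$, which rests on the identity $L_\Q^2b_0(\Sigma_L)=1/W(\Q)$ (with the convention $1/\infty=0$) together with the divergence of the growth series off $\mathcal{R}$. Everything else is a formal consequence of duality.
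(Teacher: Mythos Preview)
Your proposal is correct and matches the paper's own approach: the paper does not give a separate proof but simply states before the corollary that it follows from Theorem~\ref{thm:weightedSingerdim3} ``in conjunction with weighted Poincar\'e duality and \cite[Theorem 7.1]{Dymara},'' which is exactly the combination you spell out. Your added remark about the degree-$0$ input $L^2_\Q b_0(\Sigma_L)=1/W(\Q)$ is the only ingredient not named explicitly in the paper's one-line justification, and it is standard and correctly invoked.
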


In either case one can use \cite[Corollary 3.4]{Dymara}, along with a standard computation for growth series \cite[Theorem 17.1.9]{Davis}, to explicitly compute each $L^2_\Q$-Betti number.

The Coxeter groups whose nerves are dual to hyperbolic $3$-simplices are sometimes in literature called L\'{a}nner groups, and there is only nine L\'{a}nner groups in dimension three (see \cite[Table 6.9]{Humphreys}). So there are only nine groups standing in the way of proving Conjecture \ref{conj:weightedsinger} in full generality for dimension three.

In dimension four we prove the following theorem. First, recall that a subcomplex $A$ of a simplicial complex $L$ is \emph{full} if the vertices of a simplex of $L$ lie in $A$, then the simplex must lie in $A$.

\begin{theorem}
\label{thm:singerdim4onefulllink}
Suppose that the nerve $L$ of a Coxeter group is a triangulation of $S^3$. Furthermore, suppose that there exists a vertex of $L$ such that its link is a full subcomplex of $L$ and not dual to a hyperbolic $3$-simplex. Then $$L_\Q^2H_k(\Sigma_L)=0 \text{ for } k>2 \text{ and } \Q\leq \mathbf{1}.$$
\end{theorem}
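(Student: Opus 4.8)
The idea is to cut $\Sigma_L$ along the wall dual to the distinguished vertex $v$, push the resulting ``link piece'' through Theorem~\ref{thm:weightedSingerdim3}, and handle the two remaining ``half'' pieces by weighted Poincar\'e--Lefschetz duality together with an induction.

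By weighted Poincar\'e duality the conclusion is equivalent to $L^2_\Q H_k(\Sigma_L)=0$ for $k<2$ and $\Q\geq\mathbf 1$; here $k=0$ is automatic because $W$ is infinite and $\Sigma_L$ is connected (this also takes care of $k=4$ in the original range), so the whole content is the vanishing of $L^2_\Q H_1(\Sigma_L)$ for $\Q\geq\mathbf 1$. Write $T$ for the vertex set of $\mathrm{Lk}(v)$. Since $\mathrm{Lk}(v)$ is full it equals the full subcomplex $L_T$, the nerve of $W_T$; one then checks that $\mathrm{Star}(v)=L_{\{v\}\cup T}$ is also full, that $L=\mathrm{Star}(v)\cup L_{S\setminus v}$, and that $\mathrm{Star}(v)\cap L_{S\setminus v}=L_T$. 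Splitting the Davis chamber along this decomposition realizes $\Sigma_L=\mathcal U(W,K)$ as $\mathcal U(W,K_{\mathrm{Star}(v)})\cup\mathcal U(W,K_{L_{S\setminus v}})$ glued along $\mathcal U(W,K_{L_T})$; as each subchamber carries mirrors only on the relevant special subset, these three pieces are $W\times_{W_{\{v\}\cup T}}\Sigma_{\mathrm{Star}(v)}$, $W\times_{W_{S\setminus v}}\Sigma_{L_{S\setminus v}}$ and $W\times_{W_T}\Sigma_{L_T}$, whose weighted $L^2$-Betti numbers agree (Shapiro's lemma for the Hecke--von Neumann algebras) with those of $\Sigma_{\mathrm{Star}(v)}$, $\Sigma_{L_{S\setminus v}}$, $\Sigma_{L_T}$. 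Since $L_T=\mathrm{Lk}(v)$ is a triangulation of $S^2$ not dual to a hyperbolic $3$-simplex, Theorem~\ref{thm:weightedSingerdim3} and weighted Poincar\'e duality give $L^2_\Q H_k(\Sigma_{L_T})=0$ for $k<2$ and $\Q\geq\mathbf 1$, so the weak Mayer--Vietoris sequence
$$\cdots\to L^2_\Q H_1(\Sigma_{L_T})\to L^2_\Q H_1(\Sigma_{\mathrm{Star}(v)})\oplus L^2_\Q H_1(\Sigma_{L_{S\setminus v}})\to L^2_\Q H_1(\Sigma_L)\to L^2_\Q H_0(\Sigma_{L_T})\to\cdots$$
reduces the theorem to showing $L^2_\Q H_1(\Sigma_{\mathrm{Star}(v)})=0$ and $L^2_\Q H_1(\Sigma_{L_{S\setminus v}})=0$ for $\Q\geq\mathbf 1$.

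Now both $\mathrm{Star}(v)=v\ast L_T$ and $L_{S\setminus v}=L\setminus\mathrm{OpenStar}(v)$ are generalized homology $3$-disks --- the first by construction, the second because $L$ is a $3$-sphere --- so $\Sigma_{\mathrm{Star}(v)}$ and $\Sigma_{L_{S\setminus v}}$ are $4$-manifolds with boundary; as $L_T$ is the topological boundary of each and is full, their boundaries are $W_{\{v\}\cup T}\times_{W_T}\Sigma_{L_T}$ and $W_{S\setminus v}\times_{W_T}\Sigma_{L_T}$, whose weighted $L^2$-homology vanishes in the appropriate degrees by Theorem~\ref{thm:weightedSingerdim3} as above. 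Feeding this into the long exact sequences of the pairs $(\Sigma_\bullet,\partial\Sigma_\bullet)$ and applying weighted Poincar\'e--Lefschetz duality (which interchanges $\Q$ and $\Q^{-1}$ and trades relative cohomology for absolute homology), one finds, for $\Q\geq\mathbf 1$, that $L^2_\Q H_1(\Sigma_{\mathrm{Star}(v)})\cong L^2_{\Q^{-1}}H_3(\Sigma_{\mathrm{Star}(v)})$ and likewise for $L_{S\setminus v}$; since $\Q^{-1}\leq\mathbf 1$, the required vanishing is exactly the assertion of the theorem for the smaller complexes $\mathrm{Star}(v)$ and $L_{S\setminus v}$.

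This last step is the main obstacle, and it is what I would attack by induction on $\#S$, in a form broad enough to include both generalized homology $3$-spheres and generalized homology $3$-disks with full boundary. For a sphere one argues exactly as above, cutting at an admissible vertex into two disks with strictly fewer vertices (there is no cone point, as $L$ is a sphere) plus the boundary sphere $L_T$ handled directly by Theorem~\ref{thm:weightedSingerdim3}; for a disk one reduces through its boundary and a further cut at a suitable vertex. The delicate points are that the ``not dual to a hyperbolic $3$-simplex'' hypothesis must persist: it does so automatically for $\mathrm{Star}(v)=v\ast L_T$, where the link of $v$ is again $L_T$, but locating an admissible vertex in $L_{S\setminus v}$ --- its boundary vertices have $D^2$-links, which push the recursion down to the elementary $2$-dimensional case --- and verifying that the Poincar\'e--Lefschetz isomorphisms are compatible with the Mayer--Vietoris connecting maps, is where the work concentrates; alternatively, the cone-point structure of $\mathrm{Star}(v)$ permits a more direct treatment of that piece by collapsing its Davis chamber $W_{\{v\}\cup T}$-equivariantly onto the $v$-mirror. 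Once these vanishings are in place, the displayed Mayer--Vietoris sequence forces $L^2_\Q H_1(\Sigma_L)=0$ for $\Q\geq\mathbf 1$, and weighted Poincar\'e duality yields $L^2_\Q H_k(\Sigma_L)=0$ for $k>2$ and $\Q\leq\mathbf 1$.
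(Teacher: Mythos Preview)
Your decomposition $L=\mathrm{Star}(v)\cup L_{S\setminus v}$ along $\mathrm{Lk}(v)$ is exactly the one the paper uses, but the paper's argument is far shorter because it never passes to the Poincar\'e dual. Working directly with $\Q\leq\mathbf 1$ in degree~$3$, the Mayer--Vietoris sequence reads
\[
\cdots\to L^2_\Q H_3(\Sigma_{\mathrm{Star}(v)})\oplus L^2_\Q H_3(\Sigma_{L_{S\setminus v}})\to L^2_\Q H_3(\Sigma_L)\to L^2_\Q H_2(\Sigma_{\mathrm{Lk}(v)})\to\cdots,
\]
the right-hand term vanishes by Theorem~\ref{thm:weightedSingerdim3}, and the two disk terms vanish by Theorem~\ref{thm:weightedsingerfor34disks} (the $n=3$ disk case), which the paper quotes as a prepackaged input. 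That is the entire proof.

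Your route through Poincar\'e duality and then Poincar\'e--Lefschetz duality ultimately deposits you at the statement $L^2_{\Q^{-1}}H_3(\Sigma_{\mathrm{Star}(v)})=L^2_{\Q^{-1}}H_3(\Sigma_{L_{S\setminus v}})=0$ for $\Q^{-1}\leq\mathbf 1$, which is \emph{literally} Theorem~\ref{thm:weightedsingerfor34disks}; so the detour buys nothing and the induction you propose is an attempt to reprove that theorem from scratch. As sketched, that induction has a real problem: to recurse on $L_{S\setminus v}$ as a ``sphere-like'' object you would need an interior vertex whose link is again a full $2$-sphere not dual to a hyperbolic simplex, and nothing in the hypotheses guarantees such a vertex exists (the hypothesis gives you one good vertex in $L$, not in $L_{S\setminus v}$). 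Your fallback of using boundary vertices, whose links are $2$-disks, changes the shape of the induction and is not worked out. In short, the missing idea is simply to invoke Theorem~\ref{thm:weightedsingerfor34disks} for the two $3$-disk pieces and stay on the $\Q\leq\mathbf 1$ side throughout; once you do that, no induction or duality gymnastics are needed.
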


We obtain the following corollary.

\begin{cor}
Suppose that the nerve $L$ of a Coxeter group is a flag triangulation of $S^3$. Then $$L_\Q^2H_k(\Sigma_L)=0 \text{ for } k>2 \text{ and } \Q\leq \mathbf{1}.$$
\end{cor}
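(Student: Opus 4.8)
The plan is to obtain the Corollary as an immediate consequence of Theorem~\ref{thm:singerdim4onefulllink}, by verifying that a flag triangulation of $S^3$ automatically contains a vertex meeting the hypotheses of that theorem.

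First I would invoke the standard fact that flagness is inherited by vertex links. If $L$ is flag and $v$ is a vertex, then $\mathrm{Lk}(v,L)$ equals the full subcomplex of $L$ spanned by the vertices adjacent to $v$: a simplex $\sigma$ lies in $\mathrm{Lk}(v,L)$ exactly when $\sigma\ast v\in L$, and if every vertex of $\sigma$ is adjacent to $v$, then those vertices together with $v$ are pairwise adjacent (pairwise adjacency within $\sigma$ holding because $\sigma\in L$), hence span a simplex of $L$ by the flag condition, so $\sigma\ast v\in L$. Thus $\mathrm{Lk}(v,L)$ is a full subcomplex of $L$, and --- a full subcomplex of a flag complex being flag --- it is itself a flag complex. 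Since $L$ triangulates $S^3$, the link $\mathrm{Lk}(v,L)$ triangulates $S^2$. So every vertex link of $L$ is a flag triangulation of $S^2$.

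Next I would identify the combinatorial type of a nerve dual to a hyperbolic $3$-simplex. Such a nerve is that of a L\'anner group: on the generating set $S'$ of size $4$, every pair of generators generates a finite dihedral group, and every triple generates a finite spherical triangle group because the corresponding vertex of the hyperbolic simplex lies in $\mathbb{H}^3$, while $W'$ itself is infinite; hence the nerve is the boundary complex $\partial\Delta^3$ of a $3$-simplex. The $1$-skeleton of $\partial\Delta^3$ is a complete graph on four vertices, but the $3$-simplex on those vertices is not a face of $\partial\Delta^3$, so $\partial\Delta^3$ is not flag.

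Putting these together: choose any vertex $v$ of $L$. Its link is a full subcomplex of $L$ and a flag triangulation of $S^2$, hence is not isomorphic to $\partial\Delta^3$ and therefore not dual to a hyperbolic $3$-simplex. Theorem~\ref{thm:singerdim4onefulllink} then yields $L_\Q^2H_k(\Sigma_L)=0$ for $k>2$ and $\Q\leq\mathbf{1}$. (The hypothesis is satisfiable, since e.g.\ the barycentric subdivision of any triangulation of $S^3$ is flag.) I expect no real obstacle here: the only step requiring care is the identification above, namely that ``dual to a hyperbolic $3$-simplex'' pins down the combinatorial type $\partial\Delta^3$ and that this complex fails to be flag; once that is in hand the Corollary is a one-line deduction from the fact that links in flag complexes are flag.
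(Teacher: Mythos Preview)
Your proposal is correct and follows exactly the paper's approach: show that in a flag complex every vertex link is full and, being flag, cannot be $\partial\Delta^3$, hence cannot be dual to a hyperbolic $3$-simplex; then invoke Theorem~\ref{thm:singerdim4onefulllink}. You supply more detail than the paper (which simply asserts these two facts), but the argument is the same.
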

\begin{proof}
Since $L$ is flag, it follows that the link of every vertex is a full subcomplex of $L$. Furthermore, the link of every vertex is not the boundary of a $3$-simplex (and in particular, not dual to a $3$-simplex). Theorem \ref{thm:singerdim4onefulllink} now completes the proof.
\end{proof}

We then conclude the article by proving the following generalization of the above corollary.

\begin{theorem}
\label{thm:weighted3manifoldnerve}
Suppose that the nerve $L$ of a Coxeter group is a flag triangulation of a $3$-manifold. Then $$L_\Q^2H_k(\Sigma_L)=0 \text{ for } k>2 \text{ and } \Q\leq \mathbf{1}.$$
\end{theorem}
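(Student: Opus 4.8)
The plan is to prove, by induction on the number of vertices of $L$, the assertion that $L^2_\Q H_k(\Sigma_L)=0$ for $k>2$ and $\Q\le\mathbf 1$ whenever $L$ is a flag triangulation of a compact $3$-manifold \emph{with or without boundary} (this is what the induction needs even if one only cares about the closed case, since deleting a vertex produces a manifold with boundary). The base case is $L=\Delta^3$: there $W_L$ is a finite Coxeter group and $\Sigma_L$ a contractible Coxeter polytope, so the claim is immediate. So assume $L\ne\Delta^3$; then $L$ has a vertex $v$ which is not a cone point (indeed $\Delta^3$ is the only flag triangulation of a compact $3$-manifold all of whose vertices are cone points), and one splits $L$ along the star of $v$.

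Concretely, put $K:=\mathrm{Lk}(v,L)$, $A:=\overline{\mathrm{St}}(v,L)=v*K$, and let $B$ be the full subcomplex of $L$ spanned by $\mathrm{Vert}(L)\setminus\{v\}$. Since $L$ is flag, $K$, $A$, and $B$ are all full subcomplexes, with $L=A\cup B$ and $A\cap B=K$; hence $W_L=W_A*_{W_K}W_B$ and $\Sigma_L$ is the tree of spaces over the Bass--Serre tree of this amalgam, with vertex spaces $\Sigma_A,\Sigma_B$ and edge spaces $\Sigma_K$. Because $L$ triangulates a $3$-manifold, $K$ is a flag triangulation of $S^2$ (if $v$ is interior) or of $D^2$ (if $v$ is on $\partial L$); a flag $S^2$ is never dual to a hyperbolic $3$-simplex, so in the interior case Theorem~\ref{thm:weightedSingerdim3} gives $L^2_\Q H_j(\Sigma_K)=0$ for $j>1$ and $\Q\le\mathbf 1$, and in the boundary case this is the two-dimensional companion recorded below. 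Since $v$ is a cone point of $A$ we have $W_A\cong\langle s_v\rangle\times W_K$ and, $W_A$-equivariantly, $\Sigma_A\cong\Sigma_{\{s_v\}}\times\Sigma_K$, with the finite group $\langle s_v\rangle$ acting on the interval $\Sigma_{\{s_v\}}$; as $\Sigma_{\{s_v\}}$ is contractible this forces $L^2_\Q H_j(\Sigma_A)$ to vanish exactly when $L^2_\Q H_j(\Sigma_K)$ does, in particular for $j>1$. Finally $B$ is again a flag triangulation of a compact $3$-manifold ($|L|$ with the open star of $v$ deleted) having one fewer vertex, so by induction $L^2_\Q H_j(\Sigma_B)=0$ for $j>2$.

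Feeding this into the weighted Mayer--Vietoris sequence for $L=A\cup_K B$ --- a weakly exact sequence of Hilbert $\mathcal N(W_L)$-modules, the contributions of $W_A,W_B,W_K$ being induced up to $W_L$, whose construction is part of the weighted $L^2$-(co)homology machinery of \cite{DDJO} ---
\[
\cdots\to L^2_\Q H_k(\Sigma_K)\to L^2_\Q H_k(\Sigma_A)\oplus L^2_\Q H_k(\Sigma_B)\to L^2_\Q H_k(\Sigma_L)\to L^2_\Q H_{k-1}(\Sigma_K)\to\cdots,
\]
one reads off, for $\Q\le\mathbf 1$: when $k=3$ the two incoming terms vanish ($3>1$ for $\Sigma_A$, $3>2$ for $\Sigma_B$) and the outgoing term $L^2_\Q H_2(\Sigma_K)$ vanishes ($2>1$), so $L^2_\Q H_3(\Sigma_L)=0$; when $k=4$ the incoming terms vanish again and $L^2_\Q H_3(\Sigma_K)=0$ ($3>1$), so $L^2_\Q H_4(\Sigma_L)=0$; and for $k\ge5$ there is nothing to prove since $\dim\Sigma_L=4$. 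This closes the induction. The two lower-dimensional companions invoked above --- that $L^2_\Q H_k(\Sigma_L)=0$ for $k>1$ and $\Q\le\mathbf 1$ whenever $L$ is a flag triangulation of a compact surface, and likewise of a compact $1$-manifold --- I would prove by the identical vertex-splitting induction, bottoming out at $\Delta^2$ respectively $\Delta^1$, the relevant links being now flag triangulations of $S^1$ or $D^1$ (respectively of $S^0$ or a point); these are supplied by the ($n\le2$) weighted Singer conjecture, which holds for elementary reasons (cf.\ \cite{DDJO}), together with the trivial single-point case.

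The hard part will be the Mayer--Vietoris step: one needs, in the \emph{weighted} category, an exact sequence of Hilbert $\mathcal N(W_L)$-modules for the amalgam $W_L=W_A*_{W_K}W_B$ in which the von Neumann dimensions of the pieces behave correctly under induction from the parabolic subgroups $W_A,W_B,W_K$, with exactness robust enough (weak exactness of Hilbert modules) that vanishing of one term forces vanishing of its image and of its preimage --- this is exactly where the weighted von Neumann dimension theory of \cite{Dymara,DDJO} must do the work. A further delicate point, confined to the surface companion, arises when proving $L^2_\Q H_2(\Sigma_L)=0$: there $L^2_\Q H_1(\Sigma_K)$ with $K$ a flag circle need not vanish, so $L^2_\Q H_2(\Sigma_L)\hookrightarrow L^2_\Q H_1(\Sigma_K)$ is not conclusive on its own; one instead notes that the succeeding Mayer--Vietoris map $L^2_\Q H_1(\Sigma_K)\to L^2_\Q H_1(\Sigma_A)$ is induced by the inclusion of the deformation retract $\Sigma_K\hookrightarrow\Sigma_A\cong\Sigma_{\{s_v\}}\times\Sigma_K$, hence is injective, which forces the image of $L^2_\Q H_2(\Sigma_L)$ to be zero. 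The remaining ingredients --- the fullness of $K,A,B$ (immediate from flagness), the product computation for $\Sigma_A$, and the bookkeeping of the simultaneous downward induction on manifold dimension --- are comparatively routine.
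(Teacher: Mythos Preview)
Your argument contains a genuine error: the claim that $W_A\cong\langle s_v\rangle\times W_K$ (and hence $\Sigma_A\cong\Sigma_{\{s_v\}}\times\Sigma_K$) is false unless every edge from $v$ to $K$ carries the label $m_{vs}=2$. In a general Coxeter system the closed star $A=v*K$ is a cone as a simplicial complex, but $s_v$ need not commute with the generators of $W_K$, so $W_A$ is not a direct product and $\Sigma_A$ is not a product with an interval. This breaks your computation of $L^2_\Q H_j(\Sigma_A)$, and the same mistake recurs in your surface companion, where you use the putative product to argue that $L^2_\Q H_1(\Sigma_K)\to L^2_\Q H_1(\Sigma_A)$ is injective.

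The repair for the main induction is immediate and already sits in the paper: since $K$ is $S^2$ or $D^2$, the cone $A=v*K$ is a triangulation of $D^3$, so Theorem~\ref{thm:weightedsingerfor34disks} gives $L^2_\Q b_j(\Sigma_A)=0$ for $j\geq 2$ directly, with no product decomposition needed. Likewise, for a boundary vertex the link $K$ is a $2$-disk, and the $n=2$ case of Theorem~\ref{thm:weightedsingerfor34disks} handles it; your general ``surface companion'' (and its injectivity step) is never actually required. With these substitutions your vertex-removal Mayer--Vietoris induction goes through. For comparison, the paper's proof also peels off one generator at a time but does so inside $\Sigma_L$ via the Coxeter-cell ruin sequence $L^2_\Q H_*(\Sigma(U{-}t))\to L^2_\Q H_*(\Sigma(U))\to L^2_\Q H_*(\Omega(U,t),\partial)$, invoking Lemmas~\ref{lemma:3mnfldnerveoneruin} and~\ref{lemma:3mnfldnervetworuin} (after \cite{Schroeder1}) to kill the ruin term; this avoids having to carry a separate statement for manifolds with boundary through the induction, at the cost of the ruin machinery.
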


Consider the case where the nerve $L$ is a triangulation of the $n$-disk (in this case $\Sigma_L$ is a manifold with boundary). The following theorem will serve as one of the main ingredients in our proofs.

\begin{theorem}[{\cite[Theorem 5.16]{Mogilski}}]
\label{thm:weightedsingerfor34disks}
Suppose that the nerve $L$ of a Coxeter group is a triangulation of the $n$-disk, where $n=2,3$. If $\Q\leq \mathbf{1}$ and $k>\frac{n}{2}$, then $L_\Q^2b_k(\Sigma_L)=0$.
\end{theorem}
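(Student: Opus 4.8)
The plan is to prove the statement by a double induction: an outer induction on the dimension $n$ and, for fixed $n$, an inner induction on the number of vertices of $L$. Throughout I would pass to the fattened Davis complex, so that $\Sigma_L$ is a genuine $(n+1)$-manifold with boundary and weighted Poincar\'{e}--Lefschetz duality is available; note that its boundary is $\partial\Sigma_L=\Sigma_{\partial L}$, where $\partial L=S^{n-1}$, so the boundary is the Davis complex of a sphere triangulation one dimension down. The base cases $n\le 1$ are elementary: when $L$ is a point or a triangulated interval the relevant $\Sigma_L$ is, respectively, an arc or a tree of polygons, and the asserted vanishing follows from a direct computation (for finite $W$ the reduced weighted $L^2$-homology is ordinary reduced homology of a contractible complex, hence zero above degree $0$).

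For the inductive step I would use the long exact sequences relating the weighted $L^2$-(co)homology of $\Sigma_L$ to those of the Davis complexes of full subcomplexes and of links, as developed in \cite{DDJO,Dymara}. Concretely, choose a vertex $s$ on the boundary $\partial L=S^{n-1}$ whose link $L_s:=\mathrm{Lk}(s,L)$ is a triangulation of $D^{n-1}$; the inductive hypothesis in dimension $n-1$ then gives $L^2_\Q b_k(\Sigma_{L_s})=0$ for $k>(n-1)/2$, and in particular throughout the range $k>n/2$. Removing $s$ produces the full subcomplex $L'=L\setminus\{s\}$ on the remaining vertices, to which the inner induction (fewer vertices) applies. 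Feeding the link term and the complement term into the exact sequence, the only degrees in which a contribution to $L^2_\Q H_k(\Sigma_L)$ can survive for $k>n/2$ come from the connecting maps, and these are controlled by the lower-dimensional vanishing.

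The condition $\Q\le\mathbf{1}$ enters in two places. First, it is exactly the hypothesis under which the inductive statements for links and subcomplexes are available, so it must be preserved verbatim as one passes to $L_s$ and $L'$ (the weight is simply restricted to the corresponding generators, which keeps it $\le\mathbf{1}$). Second, I would use weighted Poincar\'{e}--Lefschetz duality, $L^2_\Q H_k(\Sigma_L)\cong L^2_{\Q^{-1}} H^{\,n+1-k}(\Sigma_L,\partial\Sigma_L)$, together with the observation that $\Q\le\mathbf{1}$ corresponds to $\Q^{-1}\ge\mathbf{1}$, to convert the top-degree relative groups into low-degree absolute groups of the boundary and interior, where the inductive data can again be applied. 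This duality step is what pins down the threshold $k>n/2$ (rather than the naive $k>n$ coming only from the homotopy dimension of a manifold with boundary).

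The main obstacle is twofold. The combinatorial half is guaranteeing that the vertex $s$ can always be chosen so that its link is a disk and the complement $L'$ remains within the class handled by the induction; since not every triangulation of $D^n$ is collapsible, this cannot be done by naive ear-removal and must instead be arranged by allowing subdivision and by formulating the inductive statement for a slightly larger class of nerves than disks. The analytic half is verifying that the relevant long exact sequences are exact at the level of reduced weighted $L^2$-(co)homology and that the connecting homomorphisms interact correctly with the von Neumann dimension, so that the vanishing of the neighbouring terms actually forces $L^2_\Q b_k(\Sigma_L)=0$ rather than merely bounding it; controlling the near-middle degree $k$ just above $n/2$, where the duality and the link sequences overlap, is the most delicate point.
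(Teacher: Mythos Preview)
The paper does not actually prove this theorem; it is quoted from \cite[Theorem~5.16]{Mogilski} and used as a black box. The argument there is not the double induction you sketch: it passes to the fattened Davis complex (so that $\Sigma_L$ becomes, up to homotopy, an $(n+1)$-manifold with boundary when $L$ is an $n$-disk) and then feeds in the \emph{unweighted} Singer conjecture for closed aspherical manifolds in the relevant dimension---known in dimension~$3$ via \cite{LottLuck,Perelman} and in dimension~$4$ via \cite{OS}. That external input is exactly why the statement is restricted to $n=2,3$.

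Your scheme, by contrast, invokes no such deep theorem, and if it went through it would prove the disk case for every $n$; a hemisphere decomposition of $S^n$ would then yield the full weighted Singer conjecture in all dimensions, which is open. The concrete failure is the one you already flag as ``most delicate''. In the vertex-removal sequence the link term $L^2_\Q H_{k-1}(\Sigma_{L_s})$ is controlled by the $(n-1)$-dimensional hypothesis only for $k-1>(n-1)/2$; when $n$ is odd (in particular $n=3$) this misses the critical degree $k=(n+1)/2$. Your proposed remedy via Poincar\'e--Lefschetz duality trades that missing degree for information about $L^2_{\Q^{-1}}H^\ast(\Sigma_{\partial L})$ with $\partial L=S^{n-1}$---but that is exactly the weighted Singer conjecture for the boundary sphere, which in the present paper (Theorem~\ref{thm:weightedSingerdim3}) is \emph{deduced from} the disk theorem you are trying to prove. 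So the duality step is circular, and without it the induction does not close at the near-middle degree. The combinatorial obstruction you mention (non-collapsible triangulated $3$-balls exist) is also real, but it is secondary to this analytic circularity.
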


\subsection{A special case of Andreev's theorem.}For the purpose of this article, we use the notation $\Sigma_L=X$ whenever $\Sigma_L$ admits a $W_L$-invariant metric making it isometric to $X$. The following theorem is now a special case of Andreev's theorem \cite[Theorem 2]{Andreev}.

\begin{theorem}
\label{thm:andreevL}
Suppose that the nerve $L$ is a triangulation of $S^2$, but not the boundary of a $3$-simplex, and let $(W,S)$ be the corresponding Coxeter system. Furthermore, suppose that
\begin{itemize}
\item For every $T\subset S$, $W_T$ is not a Euclidean reflection group.
\item $W\neq W_T\times D_\infty$, where $T\subset S$ spans empty triangle in $L$ and $D_\infty$ is the infinite dihedral group.
\end{itemize}
Then $\Sigma_L=\mathbb{H}^3$.
\end{theorem}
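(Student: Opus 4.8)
The plan is to read the statement as a dictionary translation of Andreev's theorem \cite[Theorem 2]{Andreev} into the language of Coxeter groups, and then to check that the two bulleted hypotheses, together with the exclusion of $\partial\Delta^3$, are exactly the data needed to invoke it. First I would build the dictionary. Since $L$ is a triangulation of $S^2$ it is a simplicial $2$-sphere, so its dual cell structure is the boundary complex of a simple convex $3$-polytope $P$: the vertices of $L$ (the generators $s\in S$) correspond to the facets of $P$, the edges $\{s,t\}$ of $L$ (the pairs with $m_{st}<\infty$) correspond to the edges of $P$, and the $2$-simplices of $L$ correspond to the vertices of $P$. I assign to the edge of $P$ dual to $\{s,t\}$ the dihedral angle $\pi/m_{st}$; as every $m_{st}\geq 2$, all assigned angles lie in $(0,\pi/2]$, as Andreev requires. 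The hypothesis that $L$ is not $\partial\Delta^3$ says precisely that $P$ is not a tetrahedron, which places us in the non-simplex regime of Andreev's theorem.

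Next I would verify Andreev's combinatorial conditions one at a time. The vertex condition, that the three dihedral angles meeting at a vertex of $P$ sum to more than $\pi$, is automatic: a vertex of $P$ is a $2$-simplex $\{s,t,u\}$ of $L$, so $W_{\{s,t,u\}}$ is finite, i.e.\ a spherical triangle group, which is equivalent to $\pi/m_{st}+\pi/m_{tu}+\pi/m_{su}>\pi$. A prismatic $3$-circuit (three facets pairwise adjacent with no common vertex) corresponds to an empty triangle $T$ in $L$, so $W_T$ is an infinite triangle group; the first hypothesis forbids it from being Euclidean, forcing it to be hyperbolic, which is exactly $\pi/m_{st}+\pi/m_{tu}+\pi/m_{su}<\pi$. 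For a prismatic $4$-circuit the four angles are each at most $\pi/2$, so their sum is at most $2\pi$, with equality only when all four equal $\pi/2$; in that case the four generators generate $D_\infty\times D_\infty$, the Euclidean reflection group acting cocompactly on $\mathbb{E}^2$, which is again excluded by the first hypothesis. Hence every prismatic $4$-circuit has angle sum strictly below $2\pi$.

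The step I expect to be the main obstacle is the exceptional triangular-prism case of Andreev's theorem, and this is exactly where the second hypothesis enters. Dualizing, $P$ is a triangular prism precisely when $L$ is a triangular bipyramid: an equatorial triangle $\{s,t,r\}$ together with two poles $u,v$, each joined to all of $s,t,r$ but not to one another. When moreover every edge joining a pole to the equator carries the label $2$, the group splits as $W=W_{\{s,t,r\}}\times D_\infty$ with $\{s,t,r\}$ an empty triangle, which is precisely the configuration excluded by the second hypothesis and which geometrically produces $\mathbb{H}^2\times\mathbb{R}$ rather than $\mathbb{H}^3$. I would argue that, once this product configuration is removed, the supplementary prism inequality in Andreev's theorem is met, so no exceptional obstruction survives; making this matching exact (checking that the second hypothesis is equivalent, for prism nerves, to Andreev's extra condition) is the delicate part of the argument.

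Finally I would package the conclusion. Andreev's theorem now yields a compact convex polyhedron $P\subset\mathbb{H}^3$, unique up to isometry, realizing this combinatorial type with dihedral angles $\pi/m_{st}$. Because each dihedral angle is a submultiple of $\pi$, the reflections in the facets of $P$ generate a discrete group acting properly and cocompactly on $\mathbb{H}^3$ with fundamental domain $P$, whose Coxeter matrix coincides with that of $(W,S)$; this reflection group is therefore isomorphic to $W$ as a Coxeter system. Identifying the fundamental chamber $K$ with $P$ under the correspondence of panels and facets, the tessellation $\{wP:w\in W\}$ of $\mathbb{H}^3$ is $W$-equivariantly homeomorphic to $\Sigma_L$. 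Transporting the hyperbolic metric across this equivariant homeomorphism endows $\Sigma_L$ with a $W$-invariant metric isometric to $\mathbb{H}^3$, which by the stated convention is exactly the conclusion $\Sigma_L=\mathbb{H}^3$.
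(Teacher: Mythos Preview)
Your proposal is correct and follows essentially the same approach as the paper: both arguments amount to translating the two bulleted hypotheses (together with the exclusion of $\partial\Delta^3$) into the combinatorial angle conditions of Andreev's theorem, and then reading off the hyperbolic realization. The paper's own proof is in fact considerably briefer than yours---it records the metric flag structure on $L$, observes that Euclidean circuits are full, and then simply asserts that the two conditions translate from Andreev's, deferring the detailed matching to \cite{Schroeder2}; your verification of the vertex, prismatic $3$-circuit, prismatic $4$-circuit, and triangular-prism conditions supplies exactly the details that the paper outsources.
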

\begin{proof}
The nerve $L$ of a Coxeter system $(W,S)$ has a natural piecewise spherical structure, and under this structure, if $s,t\in S$ are connected by an edge in $L$, then the edge has length $\pi-\pi/m_{st}$, where $(st)^{m_{st}}=1$. Hence $L$ inherits the structure of a \emph{metric flag complex} \cite[Lemma 12.3.1]{Davis}, meaning that any collection of pairwise connected edges of $L$ spans a simplex if and only if there exists a spherical simplex with the corresponding edge lengths.

Suppose now that $L$ is a triangulation of $S^2$. Let $C$ be an empty circuit in $L$ and suppose that $C$ is not the boundary of two adjacent triangles. We say that $C$ is a \emph{Euclidean circuit} if the corresponding Coxeter group $W_C$ is a Euclidean reflection group. It follows from $L$ being a metric flag complex that a Euclidean circuit is always a full subcomplex of $L$, and in particular, $W_C$ is a special subgroup of $W$. Conditions (i) and (ii) can now be translated from the conditions given in Andreev's Theorem (for details on how to derive them, see for example \cite[Section 3]{Schroeder2}).
\end{proof}

\subsection{Equidistant hypersurfaces.}

Suppose that the Coxeter group $W$ has nerve $L$ that is a triangulation of $S^2$ and that $\Sigma_L=\mathbb{H}^3$. Let $D$ denote the Davis chamber (in $\mathbb{H}^3$) and let $W_M$ be a special subgroup of $W$. We now consider the (possibly infinite) convex polytope $W_M D$ in $\mathbb{H}^3$.

For $t>0$, let $S_t$ denote the $t$-distant surface from a component $S$ of $\partial W_M D$. Then $S_t$ is a smooth surface (see \cite[Proposition II.2.2.1]{CEM}). In fact, $S_t$ is a union of pieces of which there are three types: hyperbolic, Euclidean, and spherical, each of which are the equidistant pieces from faces, edges, and vertices of $S$, respectively. The Euclidean pieces look like rectangles that are each adjacent to two hyperbolic pieces and two spherical pieces, and the spherical pieces are adjacent to Euclidean pieces.

As $W_M D$ is convex, the nearest point projection $p:\mathbb{H}^3\cup \partial\mathbb{H}^3\rightarrow W_M D$ is defined. If we fix $t>r>0$, then $p$ induces a map $p_{tr}: S_t\rightarrow  S_r$.

\begin{lemma}
\label{lemma:nearestpointquasi}
The map $p_{tr}: S_t\rightarrow  S_r$ induced by nearest point projection is  $\frac{\tanh(t)}{\tanh(r)}$-quasiconformal.
\end{lemma}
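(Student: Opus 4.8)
The plan is to compute the quasiconformal distortion of the nearest point projection $p_{tr} : S_t \to S_r$ directly, by analyzing it piece by piece on the hyperbolic, Euclidean, and spherical components of the equidistant surfaces, and then checking that the distortions match up along the seams so that the global bound is the maximum of the local ones. The key observation is that the nearest point projection to the convex polytope $W_M D$ is compatible with the decomposition of $\partial W_M D$ into faces, edges, and vertices: a point of $S_t$ sitting over a face $F$ projects to the point of $S_r$ over $F$ along the common geodesic perpendicular to $F$, and similarly for edges and vertices. So on each piece, $p_{tr}$ is modeled by the normal exponential map of a totally geodesic subspace (a hyperbolic plane for faces, a geodesic line for edges, a point for vertices), restricted to the distance spheres of radii $t$ and $r$.

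Concretely, I would first set up the normal coordinates around a totally geodesic $\mathbb{H}^2 \subset \mathbb{H}^3$: the distance-$t$ hypersurface is a copy of $\mathbb{H}^2$ with the metric scaled by $\cosh^2(t)$ in the directions tangent to the plane, and the flow from $S_t$ to $S_r$ contracts this scaling factor. Hence on a hyperbolic piece the map $p_{tr}$ is conformal with derivative $\cosh(r)/\cosh(t)$ in every direction — quasiconformal constant $1$. Next, for a face adjacent to an edge: near a geodesic line $\ell \subset \mathbb{H}^3$, Fermi coordinates give a metric of the form $dr^2 + \cosh^2(r)\, d\ell^2 + \sinh^2(r)\, d\theta^2$, so the distance-$t$ surface has metric $\cosh^2(t)\, d\ell^2 + \sinh^2(t)\, d\theta^2$, and the flow map $S_t \to S_r$ has principal stretches $\cosh(r)/\cosh(t)$ along $\ell$ and $\sinh(r)/\sinh(t)$ around $\theta$. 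Its quasiconformal dilatation is the ratio of these, namely $\dfrac{\cosh(r)\sinh(t)}{\cosh(t)\sinh(r)} = \dfrac{\tanh(t)}{\tanh(r)}$, which is $\geq 1$ since $t > r$. Finally, on a spherical piece (equidistant from a vertex), the metric on the distance-$t$ sphere is $\sinh^2(t)$ times the round metric, so the flow map is again conformal with factor $\sinh(r)/\sinh(t)$, dilatation $1$. The worst dilatation over all pieces is therefore $\tanh(t)/\tanh(r)$, attained on the Euclidean pieces.

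It then remains to assemble these local statements into a statement about the map on the whole surface $S_t$. For this I would note that $S_t$ is a $C^{1,1}$ (indeed $C^1$) surface — it is $\mathrm{dist}(\cdot, S) = t$ for a convex set, so it is the boundary of a convex body and its support function is differentiable — and the pieces meet along curves across which the tangent planes agree; the map $p_{tr}$ is Lipschitz and differentiable off a measure-zero set (the seams). A quasiconformal map is characterized by having locally integrable distributional derivatives satisfying the dilatation bound almost everywhere, and being a homeomorphism; $p_{tr}$ is a homeomorphism because $p$ restricts to a homeomorphism between the level sets of a convex distance function, and it is locally Lipschitz, so it is ACL, and the pointwise dilatation bound $\tanh(t)/\tanh(r)$ holds off the seams. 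This gives $K$-quasiconformality with $K = \tanh(t)/\tanh(r)$.

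The main obstacle I expect is the bookkeeping at the seams: one must be sure that the gluing of pieces does not introduce extra distortion and that the global map really is quasiconformal in the analytic (ACL) sense rather than just piecewise. In particular one should check that along a face–edge seam the stretch factor of the hyperbolic side ($\cosh(r)/\cosh(t)$ in all directions) agrees with the stretch factor of the Euclidean side along the direction tangent to the seam (which is the $\cosh(r)/\cosh(t)$ direction there), so that $p_{tr}$ is genuinely $C^1$ across that curve up to the tangential derivative; and similarly for the edge–vertex seams. Once that continuity is in place, the removability of the measure-zero seam set for quasiconformality is standard, so the dilatation computed on the Euclidean pieces dominates and yields the claimed constant.
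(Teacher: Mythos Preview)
Your proposal is correct and follows essentially the same approach as the paper: a piece-by-piece analysis showing that $p_{tr}$ is conformal on the hyperbolic and spherical pieces and has dilatation $\tanh(t)/\tanh(r)$ on the Euclidean pieces, with the same scaling factors $\cosh(r)/\cosh(t)$ and $\sinh(r)/\sinh(t)$ in the two principal directions. Your discussion of the seams and the ACL formulation actually goes beyond what the paper writes out, which simply takes the piecewise computation as sufficient.
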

\begin{proof}
It suffices to check what $p_{tr}$ does on each of the three types of pieces. First, note that a face of $S$ is simply the intersection of $\partial W_M D$  with a hyperbolic plane in $\mathbb{H}^3$. Thus $p_{tr}$ simply scales the corresponding hyperbolic pieces on $S_t$ and $S_r$ by a constant factor. Hence $p_{tr}$ is conformal there. Similarly, the map $p_{tr}$  is conformal on the spherical pieces.

Second, we consider the Euclidean piece in $S_t$ equidistant from an edge of $S$. A Euclidean piece looks like a rectangle adjacent to two hyperbolic pieces at two parallel edges (parallel in the intrinsic Euclidean geometry), and the the map induced by nearest point projection $S_t\rightarrow S$ scales by a factor of $1/\cosh(t)$ in the direction of those edges. The other two edges of the Euclidean piece are each adjacent to a spherical piece. An edge like this is the arc of a circle with radius $t$ centered at a vertex in $S$. Thus the edge has length $\theta\sinh(t)$, where $\theta$ is the dihedral angle at the corresponding edge of $S$. Hence the map $p_{tr}$ scales by a factor of $\cosh(r)/\cosh(t)$ in the direction of the edges adjacent to the hyperbolic pieces, and scales the edges adjacent to the spherical pieces by a factor of $\sinh(r)/\sinh(t)$. Therefore $p_{tr}$ is $\frac{\tanh(t)}{\tanh(r)}$-quasiconformal on the Euclidean pieces.
\end{proof}

\subsection{Proof of Theorem \ref{thm:weightedSingerdim3}.}

Suppose that $M$ is a complete smooth Riemannian manifold. Given a a nonnegative measurable function $f:M\rightarrow [0,\infty)$, we define a new norm on the $C^\infty$ $k$-forms called the $L^2_f$ norm by $$||\omega||^2_f=\int_{M} ||\omega||^2_p f(p) dV,$$ where $||\omega||^2_p$ is the pointwise norm and $dV$ is the volume form of $M$. Let $L_{f}^2\mathcal{C}^\ast(M)$ denote the weighted $L^2$ de Rham complex defined using the $L^2_f$ norm.

\begin{lemma}
\label{lemma:middlequasi}
Let $M$ and $N$ be smooth surfaces and suppose that $\phi:M\rightarrow N$ is a $K$-quasiconformal diffeomorphism. Let $g:N\rightarrow [0,\infty)$ be the function defined by $g(p)=f(\phi^{-1}(p))$. Then for every $\omega\in L_g^2\mathcal{C}^1(N)$, we have that $$\frac{1}{K}||\omega||^2_g\leq ||\phi^\ast(\omega)||^2_f\leq K ||\omega||^2_g.$$
\end{lemma}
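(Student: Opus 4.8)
The plan is to reduce the inequality to a pointwise estimate and integrate. Recall that for a 1-form $\omega$ on a surface, the pointwise norm $\|\phi^\ast\omega\|_p$ at a point $p\in M$ is computed from $\omega$ at $\phi(p)$ precomposed with the differential $d\phi_p$. The key observation is that quasiconformality is exactly the statement controlling how $d\phi_p$ distorts lengths of cotangent vectors: if $\phi$ is $K$-quasiconformal, then at each point the singular values $\lambda_1\geq\lambda_2>0$ of $d\phi_p$ satisfy $\lambda_1/\lambda_2\leq K$. Hence for any cotangent vector $\xi$ at $\phi(p)$, one has $\lambda_2|\xi|\leq |(d\phi_p)^\ast\xi|\leq\lambda_1|\xi|$, and therefore the ratio $\|\phi^\ast\omega\|_p / \|\omega\|_{\phi(p)}$ lies between $\lambda_2$ and $\lambda_1$, a window of multiplicative width at most $K$.

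The second ingredient is the change-of-variables formula relating the volume forms: $\phi^\ast(dV_N) = J_\phi\, dV_M$, where $J_\phi=\lambda_1\lambda_2$ is the Jacobian. First I would write
$$\|\phi^\ast\omega\|_f^2=\int_M \|\phi^\ast\omega\|_p^2\, f(p)\, dV_M(p),$$
then substitute $q=\phi(p)$ on the right-hand side of the desired comparison, using $g(q)=f(\phi^{-1}(q))$, so that
$$\|\omega\|_g^2=\int_N \|\omega\|_q^2\, g(q)\, dV_N(q)=\int_M \|\omega\|_{\phi(p)}^2\, f(p)\, J_\phi(p)\, dV_M(p).$$
Now both integrals are over $M$ with the same measure $f(p)\,dV_M(p)$, so it suffices to compare the integrands pointwise: I claim $\frac{1}{K}\|\omega\|_{\phi(p)}^2\, J_\phi(p)\leq \|\phi^\ast\omega\|_p^2\leq K\|\omega\|_{\phi(p)}^2\, J_\phi(p)$. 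From the singular-value bounds, $\|\phi^\ast\omega\|_p^2\leq \lambda_1^2\|\omega\|_{\phi(p)}^2 = \frac{\lambda_1}{\lambda_2}\,\lambda_1\lambda_2\,\|\omega\|_{\phi(p)}^2\leq K\, J_\phi(p)\,\|\omega\|_{\phi(p)}^2$, and symmetrically $\|\phi^\ast\omega\|_p^2\geq \lambda_2^2\|\omega\|_{\phi(p)}^2=\frac{\lambda_2}{\lambda_1}J_\phi(p)\|\omega\|_{\phi(p)}^2\geq \frac{1}{K}J_\phi(p)\|\omega\|_{\phi(p)}^2$. Integrating against $f(p)\,dV_M(p)$ yields the two displayed inequalities.

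There is essentially no serious obstacle here; the one point deserving care is the bookkeeping of whether the pullback acts on $\omega$ at $p$ or at $\phi(p)$, and making sure the Jacobian factor from the change of variables is exactly the product $\lambda_1\lambda_2$ that converts the operator-norm bounds into the factor of $K$. One should also note that the statement is dimension-specific: it is crucial that $M$ and $N$ are $2$-dimensional, because on $1$-forms in dimension $2$ the relevant distortion of $\phi^\ast$ is governed by a single singular value while the volume distortion is governed by the product of both, and the mismatch is precisely the quasiconformal dilatation. (In other degrees or dimensions the bound would involve a power of $K$.) I would remark that $\phi$ being a diffeomorphism guarantees $\lambda_2>0$ everywhere, so all the ratios above are well-defined.
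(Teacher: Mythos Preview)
Your proof is correct and follows essentially the same approach as the paper: both arguments use the pointwise bounds $\lambda_2\|\omega\|_{\phi(p)}\le\|\phi^\ast\omega\|_p\le\lambda_1\|\omega\|_{\phi(p)}$ coming from the singular values of $d\phi_p$, together with the Jacobian relation $\phi^\ast(dV_N)=\lambda_1\lambda_2\,dV_M$, and then observe that $\lambda_1^2/(\lambda_1\lambda_2)=\lambda_1/\lambda_2\le K$. The only cosmetic difference is that the paper carries out the change of variables by pushing forward to $N$ at the last step, whereas you pull everything back to $M$ first and compare integrands there.
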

\begin{proof}
The pointwise norm of a $1$-form is $||\omega||_p=\sup_{\substack{x\in T_p M\\ ||x||=1}} \omega(x)$, where $T_p M$ is the tangent space of $M$ at $p$. Since $\phi$ is $K$-quasiconformal, its differential $d\phi$ maps the circle $\{x\in T_p M\mid ||x||=1\}$ to an ellipse with semi-axis $b(p)\leq a(p)$ satisfying $\frac{a(p)}{b(p)}\leq K$. Thus for any $\omega\in L_{\Q}^2\mathcal{C}^1(N)$, $$b(p)||\omega||_{\phi(p)}\leq ||\phi^\ast(\omega)||_p\leq a(p)||\omega||_{\phi(p)}.$$ Now, let $dV_M$ and $dV_N$ be the respective volume forms of $M$ and $N$. We have that $$(fdV_M)_p=\frac{(g(\phi)\phi^\ast(dV_N))_{p}}{a(p)b(p)},$$ so for $L^2_f$ norms we have

\begin{align*}
||\phi^\ast(\omega)||^2_f&=\int_{M} ||\phi^\ast(\omega)||^2_{p} f(p) dV_M\\
&\leq \int_{M} \frac{a(p)}{b(p)}||\omega||^2_{\phi(p)} g(\phi(p)) \phi^\ast(dV_N)\\
&\leq K \int_{M}||\omega||^2_{\phi(p)} g(\phi(p)) \phi^\ast(dV_N)\\
&= K\int_N ||\omega||^2_x g(x)dV_N=K ||\omega||^2_g
\end{align*}

The remaining inequality follows similarly.
\end{proof}

Suppose that the nerve $L$ of $W$ is a triangulation of $S^2$ and that $\Sigma_L=\mathbb{H}^3$. Define $f$ to be the function $f(p)=q_w$, where $w\in W_L$ is a word of shortest length such that $p\in wD$ (here $D$ is the Davis chamber). Let $L_{\Q}^2\mathcal{H}^\ast(\mathbb{H}^3)$ denote the weighted $L^2$ de Rham cohomology defined using this $f$.

Let $W_M$ be an infinite special subgroup of $W$ and let $S$ be one of the components of $\partial W_M D$. Put coordinates $(x,t)$ on $\mathbb{H}^3$ so that $t\in\mathbb{R}$ is the oriented distance from $p\in\mathbb{H}^3$ to the closest point $x\in S$. Fix $r>0$, and for $t\geq r$ let $S_t$ denote the hypersurface consisting of points of (oriented) distance $t$ from $S$.  Let $p_{tr}: S_t\rightarrow  S_r$ be the map induced by nearest point projection, and let $\phi_{tr}$ denote the inverse of $p_{tr}$. By Lemma \ref{lemma:nearestpointquasi}, $p_{tr}$ is $K(t)$-quasiconformal, with $K(t)=\frac{\tanh(t)}{\tanh(r)}$, and hence so is its inverse $\phi_{tr}: S_r\rightarrow S_t$. Let $i_r: S_r\rightarrow\mathbb{H}^3$ and $i_t: S_t\rightarrow\mathbb{H}^3$ be the inclusions. Then $i_r$ and $i_t\circ\phi_{tr}$ are properly homotopic.

We now adapt the argument after \cite[Theorem 16.10]{DDJO} to prove the following lemma.

\begin{lemma}
\label{lemma:integrationquasi}
If $\Q\geq\mathbf{1}$, then the map $i_r^\ast:L_{\Q}^2\mathcal{H}^1(\mathbb{H}^3)\rightarrow L_{\Q}^2\mathcal{H}^1(S_r)$ induced by the inclusion $i_r$ is the zero map.
\end{lemma}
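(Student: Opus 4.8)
Here is how I would approach Lemma~\ref{lemma:integrationquasi}, following the scheme of the argument after \cite[Theorem 16.10]{DDJO}. Let $\omega\in L_{\Q}^2\mathcal{C}^1(\mathbb{H}^3)$ be a closed form representing a class in $L_{\Q}^2\mathcal{H}^1(\mathbb{H}^3)$. I want to show that $i_r^\ast\omega$ lies in the $L^2_f$-closure of the exact $1$-forms on $S_r$, so that its class in reduced cohomology vanishes. Since $i_r$ and $i_t\circ\phi_{tr}$ are properly homotopic (via the family of perpendicular geodesic segments joining $S_r$ to $S_t$), the associated chain homotopy produces, for each $t>r$, a $0$-form $\eta_t$ on $S_r$ with $i_r^\ast\omega-\phi_{tr}^\ast(i_t^\ast\omega)=d\eta_t$, namely $\eta_t(y)=\int_{\gamma_y}\omega$ where $\gamma_y$ is the segment from $y\in S_r$ to $\phi_{tr}(y)\in S_t$. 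It therefore suffices to prove (a) $\|\phi_{tr}^\ast(i_t^\ast\omega)\|_f^2\to 0$ along some sequence $t_n\to\infty$, and (b) $\eta_t\in L^2_f\mathcal{C}^0(S_r)$ for each fixed $t$; granting these, $d\eta_{t_n}$ is a sequence of exact $L^2_f$ forms converging to $i_r^\ast\omega$, which is the desired conclusion.

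For (a), I would first apply the coarea formula to the (signed) distance function from $S$, which has unit gradient and no focal locus on the exterior of the convex set $W_MD$, and use that restriction to a hypersurface does not increase the pointwise norm of a $1$-form, to obtain $\int_r^\infty\|i_t^\ast\omega\|_{f,S_t}^2\,dt\leq\|\omega\|_{f,\mathbb{H}^3}^2<\infty$; hence $\liminf_{t\to\infty}\|i_t^\ast\omega\|_{f,S_t}^2=0$. Next I would apply Lemma~\ref{lemma:middlequasi} to the $K(t)$-quasiconformal map $\phi_{tr}\colon S_r\to S_t$ with weight $f$ on $S_r$: writing $g_t(p)=f(p_{tr}(p))$ for the pushed-forward weight on $S_t$, it gives $\|\phi_{tr}^\ast(i_t^\ast\omega)\|_{f,S_r}^2\leq K(t)\,\|i_t^\ast\omega\|_{g_t,S_t}^2$. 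The crucial point is that $g_t\leq f$ on $S_t$ when $\Q\geq\mathbf{1}$: the chamber containing $p_{tr}(p)$ should lie on a minimal gallery from the chamber containing $p$ to the fundamental chamber $D$ (so that the word length does not increase and, since each $q_s\geq 1$, $f(p_{tr}(p))\leq f(p)$), because $p_{tr}(p)$ lies on the geodesic segment from $p$ to its nearest point in the convex subcomplex $W_MD$, and $D$ itself lies in this subcomplex. Combining $g_t\leq f$ with $K(t)=\tanh(t)/\tanh(r)\leq 1/\tanh(r)$ yields $\|\phi_{tr}^\ast(i_t^\ast\omega)\|_{f,S_r}^2\leq\tfrac{1}{\tanh(r)}\|i_t^\ast\omega\|_{f,S_t}^2$, and (a) follows by taking $t=t_n\to\infty$ realizing the $\liminf$ above.

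For (b) I would bound $|\eta_t(y)|^2\leq(t-r)\int_{\gamma_y}\|\omega\|^2\,ds$ by Cauchy--Schwarz and then pass to slab coordinates $(y,s)\mapsto\gamma_y(s)$ on the region between $S_r$ and $S_t$. On this slab the Jacobian relating the area forms of $S_s$ and $S_r$ is bounded above and below by constants depending only on $r$ and $t$, and similarly $f(y)\leq C(r,t)\,f(\gamma_y(s))$ for $s\in[r,t]$: a geodesic segment of length at most $t-r$ meets a number of chambers bounded in terms of $t-r$ alone (the chambers being isometric copies of the compact Davis polytope $D$), so the word lengths of the chambers it meets differ by a bounded amount. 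This gives $\|\eta_t\|_{f,S_r}^2\leq C'(r,t)\,\|\omega\|_{f,\mathbb{H}^3}^2<\infty$.

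The main obstacle is the geometric/combinatorial input in step (a) (and the weight comparison in (b)): controlling how the weight function $f$ behaves under nearest-point projection onto the convex subcomplex $W_MD$. This is exactly where the convexity of $W_MD$, its structure as the $M$-residue of $D$ (so that the metric projection is compatible with the combinatorial gate projection, which decreases gallery distance to chambers of the residue --- see \cite{Davis}), and the hypothesis $\Q\geq\mathbf{1}$ all enter; making the comparison precise, in particular handling degenerate crossings of walls by the perpendicular segments, is the delicate part. A secondary technical point is that $i_r^\ast\omega$ must genuinely represent a class in $L^2_{\Q}\mathcal{H}^1(S_r)$ --- a trace issue for general $L^2_f$ forms --- which one circumvents by proving the statement on a dense family of representatives for which the restriction is well behaved, which is enough to conclude that the induced map on reduced cohomology vanishes.
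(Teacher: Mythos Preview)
Your approach is correct and uses the same core ingredients (Lemma~\ref{lemma:middlequasi}, the coarea/Fubini integration over the foliation by $S_t$'s, and the weight comparison coming from $\Q\geq\mathbf{1}$), but it is organized differently from the paper's argument. The paper introduces an auxiliary weight $g(x,y)=f(x,0)$, constant along the perpendicular geodesics, notes that $f\geq g$ (this is exactly your gate-projection observation), and then argues by contradiction: if $[i_r^\ast\omega]\neq 0$, then proper homotopy gives $[i_r^\ast\omega]=[\phi_{tr}^\ast(i_t^\ast\omega)]$ and Lemma~\ref{lemma:middlequasi} gives $\|i_t^\ast\omega\|_g^2\geq K(t)^{-1}\|[i_r^\ast\omega]\|_g^2$ for every $t$, so integrating over $t\in[r,\infty)$ forces $\|\omega\|_g^2=\infty$, contradicting $\|\omega\|_f\geq\|\omega\|_g$. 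The payoff of working with $g$ is that it is invariant in the $t$-direction, so the pushed-forward weight under $\phi_{tr}$ is $g$ again and the proper-homotopy identification of classes is automatic; this lets the paper skip your step~(b) and the separate check that $g_t\leq f$. Your route is more constructive---you actually exhibit $i_r^\ast\omega$ as a limit of exact $L^2_f$-forms---at the price of having to bound the primitives $\eta_t$ in $L^2_f$ and to justify the word-length monotonicity along the projection geodesics. Both are valid; the paper's contradiction argument is shorter, while yours makes the mechanism by which $\Q\geq\mathbf{1}$ enters more explicit.
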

\begin{proof}
Set $g(x,y)=f(x,0)$, so $f(x,y)\geq g(x,y)$, and let $\omega$ be a closed $L^2_f$ $1$-form on $\mathbb{H}^3$. We now show that the restriction $i_r^\ast(\omega)$ to $S_r$ represents the zero class in in reduced $L^2_f$-cohomology. For the remainder of the proof, we will use the notation $||[\alpha]||_g$ and $||[\alpha]||_x$ to denote the respective $L^2_g$ norm and pointwise norm of the harmonic representative of the cohomology class $[\alpha]$.

Suppose for a contradiction that $[i_r^\ast(\omega)]\neq 0$. Then $||i_r^\ast(\omega)||_g\geq ||[i_r^\ast(\omega)]||_g> 0$. By Lemma \ref{lemma:middlequasi}, it follows that $||\phi^\ast_{tr}(i_t^\ast (\omega))||^2_g\leq K(t)||i_t^\ast (\omega)||^2_g$, and since $i_r$ and $i_t\circ\phi_{tr}$ are properly homotopic, $[i_r^\ast(\omega)]=[\phi^\ast_{tr}(i_t^\ast (\omega))]$.  Therefore $$K(t)||i_t^\ast (\omega)||^2_g\geq ||[i_r^\ast(\omega)]||^2_g>0.$$ Now, $i_t^\ast (\omega)$ is just a restriction of $\omega$, so we have the pointwise inequality $||\omega||_x\geq ||i_t^\ast (\omega)||_x$. Using Fubini's Theorem, we compute

\begin{align*}
||\omega||^2_g&=\int_{\mathbb{H}^3} ||\omega||^2_{x} g(x,y) dV\geq\int^\infty_{r}\int_{S_t} ||\omega||^2_{x} g(x,y) dA dt\geq \int^\infty_{r}\int_{S_t} ||i_t^\ast (\omega)||^2_{x} g(x,y) dA dt\\
&=\int^\infty_{r}||i_t^\ast (\omega)||^2_{g} dt\geq \int^\infty_{r}\frac{\tanh(r)}{\tanh(t)}||[i_r^\ast(\omega)]||^2_g dt=\infty.
\end{align*}
Since $||\omega||_f\geq ||\omega||_g$, this contradicts the assumption that the $L^2_f$ norm of $\omega$ is finite.
\end{proof}

Suppose that $L$ is the nerve of a Coxeter group $W_L$ and that $A$ is a full subcomplex of $L$. For the proofs that follow, note that $\dim_\Q L_\Q^2H_k(W_L\Sigma_A)=L_\Q^2 b_k(\Sigma_A)$ (see \cite[pg. 352 (vi)]{Davis}).

\begin{lemma}
\label{lemma:Ldecomp}
Suppose that the nerve $L$ is a triangulation of $S^2$ and that there exists a full subcomplex $1$-sphere $M$ of $L$ that separates $L$ into two full $2$-disks $L_1$ and $L_2$ with boundary $M$. Furthermore, suppose that one of the following holds:
\begin{enumerate}[(i)]
  \item \label{lemma:Ldecomp1} $\Sigma_M=\mathbb{R}^2$.
  \item \label{lemma:Ldecomp2} $\Sigma_L=\mathbb{H}^3$.
\end{enumerate}

Then $$L_\Q^2H_k(\Sigma_L) \text{ for } k\geq 2 \text{ and } \Q\leq \mathbf{1}.$$
\end{lemma}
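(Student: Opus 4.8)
The plan is to peel off the top degree (which is trivial), and then reduce the degree‑$2$ statement to the disk case, Theorem \ref{thm:weightedsingerfor34disks}, using the amalgamated–product structure of $W_L$. Since $L$ is a triangulation of $S^2$, the group $W_L$ is infinite, so weighted Poincar\'e duality (\cite{Dymara}) on the $3$‑manifold $\Sigma_L$ gives $L_\Q^2 b_3(\Sigma_L)=L_{\Q^{-1}}^2 b_0(\Sigma_L)=0$ when $\Q\le\mathbf 1$; hence $L_\Q^2 H_3(\Sigma_L)=0$, and it remains to treat $k=2$. The splitting $L=L_1\cup_M L_2$ along the full separating subcomplex $M$ gives $W_L=W_{L_1}*_{W_M}W_{L_2}$, and hence a weighted Mayer--Vietoris sequence
\[
\cdots\to L_\Q^2 H_k(W_L\Sigma_M)\to L_\Q^2 H_k(W_L\Sigma_{L_1})\oplus L_\Q^2 H_k(W_L\Sigma_{L_2})\to L_\Q^2 H_k(\Sigma_L)\to L_\Q^2 H_{k-1}(W_L\Sigma_M)\to\cdots,
\]
whose induced terms have von Neumann dimension $L_\Q^2 b_k(\Sigma_M)$, resp.\ $L_\Q^2 b_k(\Sigma_{L_i})$, by \cite[pg.~352 (vi)]{Davis}. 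Because $L_1,L_2$ are triangulations of the $2$‑disk, Theorem \ref{thm:weightedsingerfor34disks} gives $L_\Q^2 b_2(\Sigma_{L_i})=0$ for $\Q\le\mathbf 1$, so in degree $2$ the sequence collapses to
\[
L_\Q^2 H_2(\Sigma_L)\;\cong\;\ker\Bigl(L_\Q^2 H_1(W_L\Sigma_M)\to L_\Q^2 H_1(W_L\Sigma_{L_1})\oplus L_\Q^2 H_1(W_L\Sigma_{L_2})\Bigr).
\]

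If $\Sigma_M=\mathbb R^2$, then $W_M$ is a Euclidean reflection group, hence of polynomial growth, so $\Q\le\mathbf 1$ lies in the closed region of convergence of its growth series and $L_\Q^2 H_\ast(\Sigma_M)$ is concentrated in degree $0$; in particular $L_\Q^2 b_1(\Sigma_M)=0$, the domain of the displayed map vanishes, and $L_\Q^2 H_2(\Sigma_L)=0$. This disposes of the first alternative.

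If instead $\Sigma_L=\mathbb H^3$, the kernel above is not visibly zero, so I would argue in cohomology. By weighted Poincar\'e duality it suffices to show $L_{\Q'}^2 H^1(\Sigma_L)=0$ for $\Q'\ge\mathbf 1$, and by the de Rham description preceding Lemma \ref{lemma:integrationquasi} this group is computed by $L_{\Q'}^2\mathcal H^1(\mathbb H^3)$ for the word‑length weight $f$. Fix a class $[\omega]$ there. Each component of $W_L\Sigma_M$ is a $W_L$‑translate of $\Sigma_M$ which, inside $\mathbb H^3$, is properly homotopic to an equidistant hypersurface $S_r$ of the type appearing in Lemma \ref{lemma:integrationquasi}; since that lemma makes the restriction of $[\omega]$ to every such $S_r$ vanish, and the restriction map $L_{\Q'}^2 H^1(\Sigma_L)\to L_{\Q'}^2 H^1(W_L\Sigma_M)$ is induced from the $W_M$‑level restriction, $[\omega]$ restricts to zero on $W_L\Sigma_M$. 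The long exact sequence of the pair then lifts $[\omega]$ to $L_{\Q'}^2 H^1(\Sigma_L,W_L\Sigma_M)$. Excising $W_L\Sigma_M$, which $W_L$‑equivariantly cuts $\Sigma_L$ into chambers with stabilizers conjugate to $W_{L_1}$ and $W_{L_2}$, identifies this group with a module induced from $\bigoplus_i L_{\Q'}^2 H^1(\Sigma_{L_i},\partial\Sigma_{L_i})$; since $\Sigma_{L_i}$ is a $3$‑manifold with boundary ($L_i$ a $2$‑disk), weighted Poincar\'e--Lefschetz duality identifies its von Neumann dimension with $L_{\Q'^{-1}}^2 b_2(\Sigma_{L_i})$, which vanishes by Theorem \ref{thm:weightedsingerfor34disks} as $\Q'^{-1}\le\mathbf 1$. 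Thus $L_{\Q'}^2 H^1(\Sigma_L,W_L\Sigma_M)=0$, so $[\omega]=0$, completing this case.

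I expect the main obstacle to be precisely this cohomological step: making rigorous and $W_L$‑equivariant the assertion that the restriction to $W_L\Sigma_M$ is killed by Lemma \ref{lemma:integrationquasi} --- that is, that each translate of $\Sigma_M$ is properly homotopic in $\mathbb H^3$ to an equidistant hypersurface, uniformly enough to assemble into a single statement about the induced module --- together with the correct formulations of the weighted $L^2$‑de Rham comparison on $\mathbb H^3$ and of weighted Poincar\'e--Lefschetz duality for the $\Sigma_{L_i}$. The remaining ingredients --- the Mayer--Vietoris sequence, growth‑series vanishing for Euclidean reflection groups, and the disk case --- are routine by comparison.
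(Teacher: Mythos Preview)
Your proposal is correct and, through case (i), matches the paper's proof: top degree vanishes because $W_L$ is infinite, Mayer--Vietoris along $M$ plus Theorem~\ref{thm:weightedsingerfor34disks} reduces degree $2$ to the $L^2_\Q H_1(\Sigma_M)$ term, and the Euclidean hypothesis kills that term (the paper cites \cite[Corollary 14.5]{DDJO} rather than arguing via growth, but the content is the same). In case~(ii) you arrive at the same analytic input, Lemma~\ref{lemma:integrationquasi}, by a somewhat longer route. The paper stays in the homological Mayer--Vietoris sequence and only needs the connecting map $\partial_\ast\colon L^2_\Q H_2(\Sigma_L)\to L^2_\Q H_1(W_L\Sigma_M)$ to vanish; it quotes \cite[Lemma 16.2]{DDJO}, which identifies $\partial_\ast$ (via Poincar\'e duality on the closed $3$-manifold $\Sigma_L$) with the inclusion-induced map $i_\ast\colon L^2_{\Q^{-1}}H_1(\Sigma_M)\to L^2_{\Q^{-1}}H_1(\Sigma_L)$, and then kills $i_\ast$ with Lemma~\ref{lemma:integrationquasi} after identifying $\Sigma_M$ with a boundary component $S$ of the convex polytope $W_MD\subset\mathbb H^3$ and passing to $S_r$. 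Your version instead applies global Poincar\'e duality to pass to $L^2_{\Q'}H^1(\Sigma_L)$, uses the long exact sequence of the pair $(\Sigma_L,W_L\Sigma_M)$, and therefore needs two vanishing inputs: the restriction map (same de~Rham argument) and the relative term $L^2_{\Q'}H^1(\Sigma_L,W_L\Sigma_M)$, which you reduce via excision and weighted Poincar\'e--Lefschetz duality on the $(\Sigma_{L_i},\partial\Sigma_{L_i})$ back to Theorem~\ref{thm:weightedsingerfor34disks}. This is valid, but the paper's packaging via \cite[Lemma 16.2]{DDJO} is tighter: it uses the disk theorem once rather than effectively twice and avoids invoking a Poincar\'e--Lefschetz statement for manifolds with boundary. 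The ``assembling'' worry you flag is handled exactly as you guess: $W_L\Sigma_M$ is a disjoint union of translates of $\Sigma_M$, so one component suffices, and that component is the cocompact $W_M$-surface $S=\partial(W_MD)$, properly homotopic to $S_r$.
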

\begin{proof} Since $\Sigma_L$ is a $3$-manifold, it follows that $L_\Q^2 b_3(\Sigma_L)=0$ \cite[Proposition 20.4.1]{Davis}. Hence we must show that $L_\Q^2 b_2(\Sigma_L)=0$.
Consider the following Mayer--Vietoris sequence applied to $L=L_1\cup_M L_2$:

$$\begin{tikzcd}[column sep=tiny]
\dotsm\arrow{r} & L_\Q^2H_2(W_L\Sigma_{L_1})\oplus L_\Q^2H_2(W_L\Sigma_{L_2}) \arrow{r} & L_\Q^2H_2(\Sigma_L) \arrow{r} & L_\Q^2H_{1}(W_L\Sigma_M) \arrow{r} &\dotsm
\end{tikzcd}$$

By Theorem \ref{thm:weightedsingerfor34disks}, we have that $L_\Q^2H_2(W_L\Sigma_{L_1})=L_\Q^2H_2(W_L\Sigma_{L_2})=0$. If (\ref{lemma:Ldecomp1}) holds, then \cite[Corollary 14.5]{DDJO} implies that $L_\Q^2H_1(\Sigma_M)=0$, and we are done. If (\ref{lemma:Ldecomp2}) holds, we argue that the connecting homomorphism $\partial_\ast: L_\Q^2H_2(\Sigma_L)\rightarrow L_\Q^2H_{1}(W_L\Sigma_M)$ is the zero map. By \cite[Lemma 16.2]{DDJO}, we reduce the proof to showing that the map induced by inclusion $i_\ast: L_{\Q^{-1}}^2H_1(W_L\Sigma_M) \rightarrow L_{\Q^{-1}}^2H_1(\Sigma_L)$ is the zero map, and since $W_L\Sigma_M$ is a disjoint union of copies of $\Sigma_M$, it is enough to show that the restriction of $i_\ast$ to one summand $L_{\Q^{-1}}^2H_1(\Sigma_M)$ is zero.

Consider the infinite convex polytope $W_M D$, where $D$ is the Davis chamber for $W$. We have that $W_M$ acts properly and cocompactly on $W_M D$ by isometries. In particular, if $S$ is one of the components of $\partial W_M D$, then $W_M$ acts properly and cocompactly on $S$, and therefore $L_{\Q^{-1}}^2H^\ast(\Sigma_M)\cong L_{\Q^{-1}}^2\mathcal{H}^\ast(S)$. Hence we are done if we show that map $i^\ast:L_{\Q^{-1}}^2\mathcal{H}^1(\mathbb{H}^3)\rightarrow L_{\Q^{-1}}^2\mathcal{H}^1(S)$ induced by the inclusion $i:S\rightarrow\mathbb{H}^3$ is the zero map.

Fix $r>0$, and let $S_r$ be the $r$-distance surface from $S$. $S_r$ and $S$ are properly homotopy equivalent, and this equivalence induces a weak isomorphism between $L_{\Q^{-1}}^2\mathcal{H}^\ast(S)$ and $L_{\Q^{-1}}^2\mathcal{H}^\ast(S_r)$. Thus we have reduced the proof to showing that the map $i_r^\ast:L_{\Q^{-1}}^2\mathcal{H}^1(\mathbb{H}^3)\rightarrow L_{\Q^{-1}}^2\mathcal{H}^1(S_r)$ induced by the inclusion $i_r: S_r\rightarrow\mathbb{H}^3$ is the zero map, and therefore we are done by Lemma \ref{lemma:integrationquasi}.
\end{proof}

\begin{remark}
In \cite[Section 16]{DDJO} $W$ is strictly assumed to be right-angled, but the proof of \cite[Lemma 16.2]{DDJO} does not use this, as it only uses properties of weighted $L^2$-(co)homology.
\end{remark}

\begin{proof}[Proof of Theorem \ref{thm:weightedSingerdim3}]
We first suppose that $\Sigma_L=\mathbb{H}^3$. We need to find a full subcomplex $M$ of $L$ satisfying the hypothesis of Lemma \ref{lemma:Ldecomp}. First we suppose that $L$ is a flag complex. Let $v$ be a vertex of $L$ and set $M=Lk(v)$. Since $L$ is flag, $M$ is a full subcomplex of $L$, and since $L$ is a triangulation of the $2$-sphere, it follows that $M$ is a $1$-sphere, and we are done. Now suppose that $L$ is not flag. Since $L$ is not the boundary of a $3$-simplex, there exists an empty $2$-simplex in $L$. Let $M$ denote this empty $2$-simplex. Then $M$ separates $L$ into two full $2$-disks, both with boundary $M$, and we are done. We now suppose that $\Sigma_L\neq\mathbb{H}^3$ and use Theorem \ref{thm:andreevL} to perform a case-by-case analysis.
\vskip2mm
\noindent\emph{Case I: $W$ contains a Euclidean special subgroup $W_T$.} Let $M$ be the full subcomplex of $L$ corresponding to $W_T$. Then $M$ separates $L$ into two $2$-disks both with boundary $M$ and hence Lemma \ref{lemma:Ldecomp} (\ref{lemma:Ldecomp1}) implies the assertion.
\vskip2mm
\noindent\emph{Case II: $W= W_T\times D_\infty$, where $T\subset S$ spans empty triangle in $L$.} Either $\Sigma_L=\mathbb{R}^3$ or $\Sigma_L=\mathbb{H}^2\times\mathbb{R}$. In both cases we are done by the weighted K\"{u}nneth formula.
\vskip2mm
\noindent\emph{Case III: $L$ is the boundary of a $3$-simplex.} By assumption, $L$ is not dual to a hyperbolic simplex, so $\Sigma_L=\mathbb{R}^3$. Therefore we are done by \cite[Corollary 14.5]{DDJO}.
\end{proof}

\subsection{Proof of Theorem \ref{thm:singerdim4onefulllink}.}

In this case, $\Sigma_L$ is a $4$-manifold, and hence $L_\Q^2 b_4(\Sigma_L)=0$ \cite[Proposition 20.4.1]{Davis}. It remains to show that $L_\Q^2 b_3(\Sigma_L)=0$. Suppose that the nerve $L$ is a triangulation of $S^3$ and let $s\in L$ be a vertex. We make the following observations:

\begin{itemize}
  \item The nerve $L_{S-s}$ of the Coxeter system $(W_{S-s},S-s)$ is a $3$-disk.
  \item The nerve $St(s)$ of the Coxeter group $W_{St(s)}$ is a $3$-disk.
  \item The nerve $Lk(s)$ of the Coxeter group $W_{Lk(s)}$ is a $2$-sphere.
\end{itemize}

This is because the subcomplexes $St(s)$, $Lk(s)$, and $L_{S-s}$ of $L$ correspond to the closed star of the vertex $s$, link of the vertex $s$, and complement of the open star of $s$, respectively, which are all by assumption full subcomplexes of $L$.

Consider the following Mayer--Vietoris sequence:

$$\begin{tikzcd}[column sep=tiny]
\dotsm\arrow{r} & L_\Q^2H_3(W_L\Sigma_{L_{S-s}})\oplus L_\Q^2H_3(W_L\Sigma_{St(s)}) \arrow{r} & L_\Q^2H_3(\Sigma_L) \arrow{r} & L_\Q^2H_2(W_L\Sigma_{Lk(s)}) \arrow{r} &\dotsm
\end{tikzcd}$$

By Theorem \ref{thm:weightedsingerfor34disks}, $L^2_\Q b_3(\Sigma_{St(s)})=0$ and $L^2_\Q b_3(\Sigma_{L_{S-s}})=0$, and by Theorem \ref{thm:weightedSingerdim3}, $L^2_\Q b_2(\Sigma_{Lk(s)})=0$. Therefore by the above sequence, $L^2_\Q b_3(\Sigma_L)=0$.

\subsection{Proof of Theorem \ref{thm:weighted3manifoldnerve}}

Given a Coxeter system $(W,S)$ we let $\Delta$ denote the standard $(|S|-1)$-simplex with codimension-one faces indexed by $S$. Thus every $T\subset S$ corresponds to a codimension-$|T|$ face of $\Delta$, denoted by $\Delta_T$, with corresponding barycenter $v_T$. The Davis chamber $D$ is the subcomplex of the barycentric subdivision of $\Delta$ spanned by the barycenters $v_T$ with $T\in\mathcal{S}$, where $\mathcal{S}$ is the poset of subsets $T\subset S$ such that $W_T$ is finite. Set $D_T=D\cap\Delta_T$.

For each $T\in\mathcal{S}$, let $c_T$ denote the union of simplices $c\subset\Sigma_L$ such that $c\cap D_T=v_T$. The boundary of $c_T$ is then cellulated by $wc_U$, where $w\in W_T$ and $U\subset T$. With its simplicial structure, the boundary $\partial c_T$ is the Coxeter complex corresponding to the Coxeter system $(W_T,T)$, which is a sphere since $W_T$ is finite. It follows that $c_T$ and its translates are disks, which are called \emph{Coxeter cells of type $T$}. We denote $\Sigma_L$ with this decomposition into Coxeter cells by $\Sigma_{cc}$.

Now, for for $U\subset S$, set $\mathcal{S}(U):=\{T\in\mathcal{S}\mid T\subset U\}$. Define $\Sigma(U)$ to be the subcomplex of $\Sigma_{cc}$ consisting of all (closed) Coxeter cells of type $T$ with $T\in\mathcal{S}(U)$. Given $T\in\mathcal{S}(U)$, we define the following subcomplexes of $\Sigma(U)$:

\begin{align*}
\Omega_{UT}: & \hskip2mm \text{the union of closed cells of type }T', \text{ with }T'\in\mathcal{S}(U)_{\geq T},\\
\partial\Omega_{UT}: & \hskip2mm \text{the cells of }\Omega(U,T)\text{ of type } T'', \text{ with } T''\not\in\mathcal{S}(U)_{\geq T}.
\end{align*}

The pair $(\Omega(U,T),\partial\Omega(U,T))$ is the \emph{$(U,T)$-ruin}. For brevity, we sometimes write $(\Omega(U,T),\partial)$ to denote the $(U,T)$-ruin. Note that if $T=\emptyset$, then $\Omega(U,T)=\Sigma(U)$ and $\partial\Omega(U,T)=\emptyset$.

 For $s\in T$, set $U'=U-s$ and $T'=T-s$. As in \cite[Proof of Theorem 8.3]{DDJO}, we have the following weak exact sequence:

\[\begin{tikzcd}[column sep=small]
\cdot\cdot\cdot \arrow{r} & L_\Q^2H_{\ast}(\Omega(U',T'),\partial) \arrow{r} & L_\Q^2H_{\ast}(\Omega(U,T'),\partial) \arrow{r} & L_\Q^2H_{\ast}(\Omega(U,T),\partial) \arrow{r} &\cdot\cdot\cdot
\end{tikzcd}\]

For the special case when $U=S$ and $T=\{s\}$ the above sequence becomes:

\[\begin{tikzcd}[column sep=small]
\cdot\cdot\cdot \arrow{r} & L_\Q^2H_{\ast}(\Sigma(S-s)) \arrow{r} & L_\Q^2H_{\ast}(\Sigma(S)) \arrow{r} & L_\Q^2H_{\ast}(\Omega(S,T),\partial) \arrow{r} &\cdot\cdot\cdot
\end{tikzcd}\]

\begin{lemma}
\label{lemma:3mnfldnerveoneruin}
Suppose that $L$ is a flag triangulation of a $3$-manifold. Then for every $t\in L$, $L_\Q^2H_{\ast}(\Omega(S,t),\partial\Omega(S,t))=0$ for $\ast>2$ and $\Q\leq\mathbf{1}$.
\end{lemma}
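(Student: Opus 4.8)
The plan is to analyze the $(S,t)$-ruin $(\Omega(S,t),\partial\Omega(S,t))$ by relating it to the Davis complex of the link $Lk(t)$, which is a flag triangulation of a $2$-sphere, and then to invoke Theorem~\ref{thm:weightedSingerdim3}. First I would unwind the definitions: the Coxeter cells of type $T'\in\mathcal{S}(S)_{\geq t}$ are exactly the cells whose type contains $t$, i.e. the cells ``dual'' to simplices of $L$ containing the vertex $t$. Since $L$ is a flag triangulation of a $3$-manifold, the vertex $t$ has link $Lk(t)$ a flag triangulation of $S^2$, and $St(t)$ is a $3$-disk. The union of the Coxeter cells of type $T'\ni t$ should be identifiable, $W_{St(t)}$-equivariantly, with a product-like or cone-like neighborhood built from the Coxeter cell $c_{\{t\}}$ (an interval, since $W_{\{t\}}=\mathbb{Z}/2$) and the Davis complex $\Sigma_{Lk(t)}$ of the link. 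Concretely, I expect $(\Omega(S,t),\partial\Omega(S,t))$ to be weakly isomorphic, in $L^2_\Q$-cohomology, to $(W_L\times_{W_{St(t)}})$ applied to a pair of the form $(\Sigma_{Lk(t)}\times[0,1],\ \Sigma_{Lk(t)}\times\{0,1\})$ or to the cone pair $(C\Sigma_{Lk(t)},\Sigma_{Lk(t)})$ — in either case the relevant reduced $L^2_\Q$-cohomology in degree $\ast$ is a degree shift of $L^2_\Q H_{\ast-1}(\Sigma_{Lk(t)})$.

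The second step is then a dimension count. By Theorem~\ref{thm:weightedSingerdim3}, applied to the flag $2$-sphere $Lk(t)$ (which, being flag, is not the boundary of a $3$-simplex and in particular not dual to a hyperbolic $3$-simplex), we have $L^2_\Q b_k(\Sigma_{Lk(t)})=0$ for $k>1$ when $\Q\leq\mathbf{1}$. Under the identification above, $L^2_\Q H_{\ast}(\Omega(S,t),\partial)$ vanishes for $\ast-1>1$, i.e. for $\ast>2$, which is exactly the claim. To make the degree-shift argument precise I would set up a short weak exact sequence of the ruin pair: either the pair sequence of $(\Sigma_{Lk(t)}\times[0,1],\partial)$ — whose relative $L^2_\Q$-cohomology is computed by the suspension/Künneth formula against the interval, giving the shift by one — or, if the cone model is the correct one, the long exact sequence of the pair $(C\Sigma_{Lk(t)},\Sigma_{Lk(t)})$ together with the fact that $L^2_\Q H_\ast$ of a ``Davis chamber cone'' on a full subcomplex is controlled by Theorem~\ref{thm:weightedsingerfor34disks} applied to the $3$-disk $St(t)$ and by the link's cohomology in the remaining degrees.

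I expect the main obstacle to be pinning down exactly which complex the $(S,t)$-ruin is and getting the $W_L$-equivariance and the weighting function $\Q$ to match up correctly through the identification — i.e. verifying that the Coxeter-cell structure on the cells of type $T'\ni t$ really does assemble into the claimed product or cone over $\Sigma_{Lk(t)}$, with the boundary $\partial\Omega(S,t)$ corresponding precisely to the expected sub-object, and that the weights transported along this identification agree with the intrinsic weights for $W_{Lk(t)}$. Once the geometric identification is secure, the cohomological vanishing is a formal consequence of Theorem~\ref{thm:weightedSingerdim3}, the weighted Künneth formula, and (for one of the degrees) Theorem~\ref{thm:weightedsingerfor34disks}. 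A secondary technical point is handling the low-degree terms and the exactness of the ruin sequence carefully enough that no spurious contribution to $L^2_\Q H_\ast$ for $\ast>2$ survives; here I would lean on the already-established sequence $\cdots\to L_\Q^2H_{\ast}(\Omega(U',T'),\partial)\to L_\Q^2H_{\ast}(\Omega(U,T'),\partial)\to L_\Q^2H_{\ast}(\Omega(U,T),\partial)\to\cdots$ and induct downward on the rank of $T$ if the direct identification proves awkward.
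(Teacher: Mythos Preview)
Your ``cone model'' branch is exactly the paper's argument, and once you commit to it the proof is short. The key observation you should make explicit is that $\Omega(S,t)=\Omega(St(t),t)$, since any $T'\in\mathcal{S}$ with $t\in T'$ automatically satisfies $T'\subset St(t)$. Then the ruin exact sequence you already quoted, specialized to $U=St(t)$, $T=\{t\}$, $s=t$, reads
\[
\cdots\to L_\Q^2H_\ast(\Sigma(Lk(t)))\to L_\Q^2H_\ast(\Sigma(St(t)))\to L_\Q^2H_\ast(\Omega(S,t),\partial)\to\cdots,
\]
and since $L$ is flag the nerves of $W_{St(t)}$ and $W_{Lk(t)}$ are a $3$-disk and a flag $2$-sphere (hence not the boundary of a $3$-simplex). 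Theorems~\ref{thm:weightedsingerfor34disks} and~\ref{thm:weightedSingerdim3} then kill the two outer terms in the relevant range and you are done. There is no need to geometrically identify the ruin as a product or cone, nor to induct on the rank of $T$; the exact sequence does all the work. Your product model $\Sigma_{Lk(t)}\times[0,1]$ with its pure degree shift is the right heuristic in the right-angled case but is not literally correct for arbitrary Coxeter groups, which is why the paper routes through $\Sigma(St(t))$ and Theorem~\ref{thm:weightedsingerfor34disks} instead.
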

\begin{proof}
First, for $t\in L$, note that the $(S,t)$-ruin has the property that

$$\Omega(S,t)=\Omega(St(t),t),$$

where $St(t)=\{s\in S\mid m_{st}<\infty\}$. Set $Lk(t)=St(t)-t$, and so we have the following weak exact sequence:

\[\begin{tikzcd}[column sep=small]
 \cdot\cdot\cdot\arrow{r} & L_\Q^2H_{\ast}(\Sigma(Lk(t))) \arrow{r} & L_\Q^2H_{\ast}(\Sigma(St(t))) \arrow{r} & L_\Q^2H_{\ast}(\Omega(S,t),\partial\Omega(S,t)) \arrow{r} &\cdot\cdot\cdot
\end{tikzcd}\]

Note that $$L_\Q^2b_{\ast}(\Sigma(St(t)))=L_\Q^2b_{\ast}(\Sigma_{St(t)}) \text{ and } L_\Q^2b_{\ast}(\Sigma(Lk(t)))=L_\Q^2b_{\ast}(\Sigma_{Lk(t)}),$$ where $\Sigma_{St(t)}$ and $\Sigma_{Lk(t)}$ are the Davis complexes corresponding to the subgroups $W_{St(t)}$ and $W_{Lk(t)}$, respectively. Since $L$ is flag, the respective nerves of the groups $W_{St(t)}$ and $W_{Lk(t)}$ are a $3$-disk and a $2$-sphere. Furthermore, the nerve of $W_{Lk(t)}$ is not the boundary of a $3$-simplex (again, $L$ is flag). By Theorem \ref{thm:weightedsingerfor34disks}, $L^2_\Q b_k(\Sigma_{St(t)})=0$ for $k> 2$, and by Theorem \ref{thm:weightedSingerdim3}, $L_\Q^2b_k(\Sigma_{Lk(t)})=0$ for $k>1$. Therefore weak exactness of the sequence implies that $L_\Q^2H_{\ast}(\Omega(S,t),\partial)=0$ for $\ast>2$.
\end{proof}

\begin{lemma}[Compare {\cite[Lemma 4.1]{Schroeder1}}]
\label{lemma:3mnfldnervetworuin}
For every $T\in\mathcal{S}^{(2)}$ and $U\subset S$ with $T\subset U$, we have $L_\Q^2H_4(\Omega(U,T),\partial\Omega(U,T))=0$ for $\Q\leq\mathbf{1}$.
\end{lemma}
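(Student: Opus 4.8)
The plan is to peel the two generators of $T=\{s,t\}$ off one at a time via the ruin exact sequence, reducing the statement to the vanishing of certain weighted $L^2$-Betti numbers of Davis complexes of nerves built out of the link of the edge $T$ in $L$. Since $L$ is a flag triangulation of a $3$-manifold this link is a $1$-manifold, whose full subcomplexes are simple enough that the whole computation collapses onto the disk case handled by Theorem~\ref{thm:weightedsingerfor34disks}.

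First I localize the ruin. Every Coxeter cell occurring in $(\Omega(U,T),\partial\Omega(U,T))$ has type a face of some spherical $T'\supseteq T$, and such a $T'$ is a simplex of $L$ containing the edge $T$, hence lies in the closed star $St(T)$; therefore $(\Omega(U,T),\partial)=(\Omega(U\cap St(T),T),\partial)$ and I may assume $U\subseteq St(T)$. Because $L$ is flag, $L_{St(T)}$ is a full subcomplex of $L$, equal to the join $T\ast L_{Lk(T)}$, and $L_{Lk(T)}$ is a flag triangulation of the link of the edge $T$ in the $3$-manifold $L$, hence a $1$-manifold and so a disjoint union of arcs and circles. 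Set $A:=L_{U\cap Lk(T)}$, a full subcomplex of $L_{Lk(T)}$ and therefore a disjoint union $A=\bigsqcup_i A_i$ of arcs, circles and isolated points; then $L_U=T\ast A$, $L_{U-s}=t\ast A$, $L_{U-t}=s\ast A$ and $L_{U\setminus T}=A$.

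Next I apply the ruin exact sequence twice. With $T'=\{t\}$ and $U'=U-s$ it gives a weakly exact
\[
L_\Q^2H_4(\Omega(U,\{t\}),\partial)\to L_\Q^2H_4(\Omega(U,T),\partial)\to L_\Q^2H_3(\Omega(U-s,\{t\}),\partial),
\]
so by weak exactness and additivity of von Neumann dimension it is enough to prove $L_\Q^2H_k(\Omega(V,\{t\}),\partial)=0$ for $k\geq3$ and $\Q\leq\mathbf{1}$, for any $V\subseteq St(T)$ containing $t$ --- a mild strengthening of Lemma~\ref{lemma:3mnfldnerveoneruin}. For such a $V$ the ruin exact sequence with $T'=\emptyset$, $U'=V-t$ reads
\[
L_\Q^2H_k(\Sigma(V))\to L_\Q^2H_k(\Omega(V,\{t\}),\partial)\to L_\Q^2H_{k-1}(\Sigma(V-t)),
\]
and, using $L_\Q^2b_j(\Sigma(V))=L_\Q^2b_j(\Sigma_{L_V})$, it suffices to show $L_\Q^2b_j(\Sigma_{L_V})=0$ for $j\geq2$ when $L_V$ is any of $T\ast A$, $s\ast A$, $t\ast A$, or $A$.

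Finally I carry out this $L^2$-Betti number computation. Each of these nerves is the union, glued along a common full subcomplex (the edge $T$, a single vertex, or the empty complex), of the corresponding pieces $T\ast A_i$, $s\ast A_i$, $t\ast A_i$, or $A_i$. Each $T\ast A_i$ is a $2$- or $3$-disk ($\{s,t\}$ joined with an arc or a circle is a $3$-disk, joined with a point is a $2$-disk), each $s\ast A_i$ and $t\ast A_i$ is a $2$-disk or an edge, and each $A_i$ is a $1$-disk, a circle, or a point. By Theorem~\ref{thm:weightedsingerfor34disks} for the $2$- and $3$-disk pieces, by \cite[Proposition~20.4.1]{Davis} for the top degree of the remaining manifold pieces, and by compactness and contractibility of the Davis complex of an edge or a vertex, every piece $Z$ has $L_\Q^2b_j(\Sigma_Z)=0$ for $j\geq2$, while the intersection terms (Davis complexes of an edge, a vertex, or the empty nerve) have $L_\Q^2b_j=0$ for $j\geq1$. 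An iterated Mayer--Vietoris over the pieces, valid exactly as in Lemma~\ref{lemma:Ldecomp} since every piece and intersection is a full subcomplex, then propagates this to $L_\Q^2b_j(\Sigma_{L_V})=0$ for $j\geq2$; feeding $k=3,4$ into the previous display gives $L_\Q^2H_k(\Omega(V,\{t\}),\partial)=0$ for $k\geq3$, which via the first exact sequence completes the proof. The one genuinely delicate point is that $T\ast A$ and $s\ast A$ fail to be manifolds as soon as $A$ is disconnected, so Theorem~\ref{thm:weightedsingerfor34disks} cannot be invoked for them directly --- the Mayer--Vietoris splitting into disk pieces has to be done first, checking at each stage that the pieces and their intersections are full subcomplexes so that the decomposition of weighted $L^2$-(co)homology applies; the remainder is bookkeeping of degrees.
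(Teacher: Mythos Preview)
Your argument is correct, but it takes a genuinely different route from the paper's. The paper's proof is a two-line reduction to external references: first, the argument of \cite[Lemma 4.1]{Schroeder1} goes through verbatim to give $L_{\mathbf{1}}^2H_4(\Omega(U,T),\partial)=0$, the only input being that $\Sigma_{cc}$ is a $4$-pseudomanifold (each $3$-cell lies in exactly two $4$-cells) because $L$ is a flag triangulation of a $3$-manifold; second, the comparison result \cite[Lemma 4.8]{Mogilski} transfers this vanishing from $\Q=\mathbf{1}$ to all $\Q\leq\mathbf{1}$. No exact sequences, no disk computations.

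By contrast, you localize to $St(T)$, peel off the two generators of $T$ via the ruin exact sequence, and reduce to showing $L_\Q^2 b_j(\Sigma_Z)=0$ for $j\geq 2$ where $Z$ ranges over joins of $T$, $s$, $t$, or $\emptyset$ with full subcomplexes of the edge-link $Lk(T)$. Since that link is a $1$-manifold, its full subcomplexes break into points, arcs, and at most one circle, and the corresponding joins are disks of dimension $\leq 3$ (or an edge, or a point), handled by Theorem~\ref{thm:weightedsingerfor34disks} together with elementary top-degree vanishing. The Mayer--Vietoris assembly over components is valid because each piece and intersection is full. This route is longer but has the virtue of being internal to the paper: it uses only Theorem~\ref{thm:weightedsingerfor34disks} and basic facts already invoked elsewhere, rather than the pseudomanifold chain-level argument and the $\Q$-monotonicity lemma from the cited references. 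Conversely, the paper's approach makes transparent \emph{why} degree $4$ is special (top cells in a pseudomanifold), which your inductive unwinding obscures.
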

\begin{proof}
The proof of \cite[Lemma 4.1]{Schroeder1} goes through to show that $L_\mathbf{1}^2H_4(\Omega(U,T),\partial)=0$, the main point being that $L$ is a flag triangulation of a $3$-manifold, and so it follows that $\Sigma_{cc}$ is a $4$-pseudomanifold (i.e. every $3$-cell of $\Sigma_L$ is contained in precisely two $4$-cells). The argument in \cite[Lemma 4.8]{Mogilski} now completes the proof.
\end{proof}

\begin{proof}[Proof of Theorem \ref{thm:weighted3manifoldnerve}]
With the above lemmas, we now follow \cite[Proof of The Main Theorem]{Schroeder1} line by line. For every $U\subset S$ and $t\in U$, we have the following weak exact sequence:
\[\begin{tikzcd}[column sep=small]
\cdot\cdot\cdot \arrow{r} & L_\Q^2H_{\ast}(\Sigma(U-t)) \arrow{r} & L_\Q^2H_{\ast}(\Sigma(U)) \arrow{r} & L_\Q^2H_{\ast}(\Omega(U,t),\partial) \arrow{r} &\cdot\cdot\cdot
\end{tikzcd}\]

By Lemma \ref{lemma:3mnfldnervetworuin} and \cite[Proposition 4.2]{Schroeder1}, $L_\Q^2H_{\ast}(\Omega(U,t),\partial)=0$ for $\ast>2$, and hence by exactness, $$L_\Q^2H_{\ast}(\Sigma(U-t))\cong L_\Q^2H_{\ast}(\Sigma(U)) \text{ for } \ast>2.$$ It follows that $L_\Q^2H_{\ast}(\Sigma(S))\cong L_\Q^2H_{\ast}(\Sigma(\emptyset))$ for $\ast>2$, and hence the theorem.
\end{proof}

\section*{Acknowledgements}
The author would like to thank his Ph.D. advisor Boris Okun for his invaluable guidance and discussions.

\end{document}